\theoremstyle{plain}
\newtheorem{theorem}{Theorem}[section]
\newtheorem{proposition}[theorem]{Proposition}
\newtheorem{lemma}[theorem]{Lemma}
\newtheorem{example}[theorem]{Example}
\newtheorem{remark}[theorem]{Remark}
\newtheorem{definition}[theorem]{Definition}
\newcommand{\cechtext}{\v{C}ech\xspace}
\newcommand{\Z}{\mathbb{Z}}
\newcommand{\R}{\mathbb{R}}
\newcommand{\FF}{\mathcal{F}}
\newcommand{\II}{\mathcal{I}}
\newcommand{\RR}{\mathcal{R}}
\newcommand{\HH}{{\mathrm{H}}}
\newcommand{\PH}{{\mathrm{PH}}}
\newcommand{\D}{{\mathrm{D}}}
\newcommand{\DR}{{\mathrm{DR}}}
\newcommand{\vr}{\mathrm{VR}}
\newcommand{\cech}{\mathrm{\check{C}}}
\newcommand{\uu}[1]{\underline{x}}
\newcommand{\hurl}[1]{\href{https://#1}{\texttt{#1}}}
\newcommand{\email}[1]{\href{mailto:#1}{\texttt{#1}}}
\title{Flagifying the Dowker Complex}
\author{Marius Huber\thanks{Supported by the Swiss National Science Foundation (project no. 209413).} \\
        Department of Computational Linguistics,
        University of Zürich, Switzerland \\ {\email{marius.huber@uzh.ch}}
        \and
        Patrick Schnider \\
        Department of Mathematics and Computer Science,
        University of Basel\\
        Department of Computer Science, ETH Z\"{u}rich, Switzerland \\ {\email{patrick.schnider@inf.ethz.ch}}
}
\date{}
\begin{document}

\maketitle

\begin{abstract}
The Dowker complex \(\D_{R}(X,Y)\) is a simplicial complex capturing the topological interplay between two finite sets \(X\) and \(Y\) under some relation \(R\subseteq X\times Y\).
While its definition is asymmetric, the famous Dowker duality states that \(\D_{R}(X,Y)\) and \(\D_{R}(Y,X)\) have homotopy equivalent geometric realizations.
We introduce the Dowker-Rips complex \(\DR_{R}(X,Y)\), defined as the flagification of the Dowker complex or, equivalently, as the maximal simplicial complex whose \(1\)-skeleton coincides with that of \(\D_{R}(X,Y)\).
This is motivated by applications in topological data analysis, since as a flag complex, the Dowker-Rips complex is less expensive to compute than the Dowker complex.
While the Dowker duality does not hold for Dowker-Rips complexes in general, we show that one still has that \(\HH_{i}(\DR_{R}(X,Y))\cong\HH_{i}(\DR_{R}(Y,X))\) for \(i=0,1\).
We further show that this weakened duality extends to the setting of persistent homology, and quantify the ``failure" of the Dowker duality in homological dimensions higher than \(1\) by means of interleavings.
This makes the Dowker-Rips complex a less expensive, approximate version of the Dowker complex that is usable in topological data analysis.
Indeed, we provide a Python implementation of the Dowker-Rips complex and, as an application, we show that it can be used as a drop-in replacement for the Dowker complex in a tumor microenvironment classification pipeline.
In that pipeline, using the Dowker-Rips complex leads to increase in speed while retaining classification performance.
\end{abstract}

%%%%%%%%%%%%%%%%%%%%%%%%%%%%%%%%%%%%%%%%%%%%%%%%%%%%%%

\section{Introduction}\label{sec:introduction}

Topological data analysis (TDA) provides a framework for extracting qualitative geometric and topological features from complex data sets.
Central to this approach is the construction of simplicial complexes that approximate the shape of an data set or, more generally, a metric space.
A prominent example of such a complex is the \cechtext complex, where a finite set of points is declared to span a simplex precisely if the balls of some fixed radius \(\varepsilon>0\) around the points have non-empty intersection.
While the \cechtext complex provably captures the topology of the union of all \(\varepsilon\)-balls, it is notoriously expensive to compute because triple and higher order intersections of balls must be checked (see e.g.~\cite[Chapter 2.5]{ghrist_elementary_applied_topology} and~\cite[Chapter III]{edelsbrunner_computational_topology}).
As a way around this, one often resorts to working with a simpler complex known as the Vietoris-Rips complex in practice.
By definition, the Vietoris-Rips complex is obtained by flagifying of the \cechtext complex, that is, by adding all possible simplices whose edges are already present in the \cechtext complex.
By construction, the Vietoris-Rips complex is thus entirely determined by its \(1\)-skeleton, which coincides with that of the \cechtext complex.
This makes the Vietoris-Rips complex less expensive to describe, compute and store.
Indeed, several software packages for computing persistent homology like GUDHI \cite{maria_gudhi} and Ripser \cite{bauer_ripser} allow for a significant speed-up in computation time when working with flag complexes.
Moreover, even though the Vietoris-Rips complex does not enjoy the same theoretical guarantees regarding the capturing of the topology of the underlying data set, it is guaranteed to be ``topologically close" to the \cechtext complex in the sense that the two complexes are interleaved.
Finally, there do exist conditions under which such guarantees for the Vietoris-Rips complex do exist~\cite{chambers_de_silva_vietoris_rips_of_planar_sets,attali_vietoris_rips_reconstruction_of_samples_shapes}.

While both the \cechtext and Vietoris-Rips complexes are used to analyze a single data set, one might be interested in analyzing the topology of a data set relative to another one living in the same space (or, equivalently, the topology of a subset of a data set relative to its complement).
One tool for doing so is the Dowker complex, which was introduced by Dowker in 1952~\cite{dowker_homology_groups_of_relations}.

\begin{definition}\label{def:dowker_complex}
    Let \(X,Y\) be two finite sets and let \(R\subseteq X\times Y\) be a non-empty relation.
    The \emph{Dowker complex on \(X\) relative to \(Y\)} is the simplicial complex \(\D_{R}(X,Y)\) defined by the rule that a finite subset \(\sigma\subseteq X\) belongs to \(\D_{R}(X,Y)\) iff there exists \(y\in Y\) such that \((x,y)\in R\) for all \(x\in\sigma\).
\end{definition}

If \(X\) and \(Y\) in Definition~\ref{def:dowker_complex} are subsets of a metric space \((Z,d)\), one may define a relation \(R_{\varepsilon}\subseteq X\times Y\) by declaring \((x,y)\in R_{\varepsilon}\) iff \(d(x,y)\leq\varepsilon\) for \(\varepsilon\geq 0\).
In this setting, the Dowker complex may be regarded as a variant of the \cechtext complex where one does not simply require the intersection of \(\varepsilon\)-balls around elements of \(X\) to be non-empty, but indeed to contain an element of \(Y\).

A particularly nice feature of the Dowker complex is given by the \emph{Dowker duality}, proven by Dowker in the original paper introducing Dowker complexes \cite{dowker_homology_groups_of_relations}.
It states that the two complexes \(\D_{R}(X,Y)\) and \(\D_{R}(Y,X)\) are homotopy equivalent and, in particular, have isomorphic homology groups.
This result has been extended to filtrations by Chowdhury and M\'{e}moli, who have shown that these homotopy equivalences commute with the inclusions of the filtrations, thus extending Dowker duality to the setting of persistent homology~\cite{chowdhury_memoli_functorial_dowker_theorem}.
In other words, this more general form of Dowker duality allows one to compute persistent homology for an entire filtration of complexes \(\left\{\D_{R}(X,Y)\right\}_{R\in\RR}\) for some set \(\RR\) of nested relations, and this persistent homology is guaranteed to coincide with that of the corresponding filtration \(\left\{\D_{R}(Y,X)\right\}_{R\in\RR}\).
In particular, this may be applied to the relations \(R_{\varepsilon}\) in the setting of metric spaces.
From a practical perspective, this duality allows one to compute the smaller of the two complexes at each step (which amounts to potentially swapping the roles of \(X\) and \(Y\)).
This can be crucial for computation time and memory consumption, in particular if one of \(X\) and \(Y\) is significantly smaller than the other.
In the context of metric spaces, the persistence diagrams resulting from filtrations of Dowker complexes provide a way of analyzing whether and how the classes \(X\) and \(Y\) are colocalized in the ambient metric space \(Z\) (see e.g.~\cite[Section 5.1.2]{stolz_relational_persistent_homology} for details).
Dowker complexes have seen applications inside math as well as outside of math, in domains as diverse as computational biology, data science, machine learning and neuroscience ~\cite{stolz_relational_persistent_homology,choi_revisiting_link_prediction,brun_blaser_sparse_dowker_nerves,zemene_clustering,liu_dowker_based_machine_learning,moshkov_predicting_compound_activity,vaupel_topological_perspective_on_dual_nature_of_neural_state_space,freund_lattice_base_and_topological_representations,garland_exploring_topology_dynamical_reconstructions}.
For more details on Dowker complexes, see, for instance,~\cite{chazal_stability_for_geometric_complexes,ghrist_elementary_applied_topology,chowdhury_memoli_functorial_dowker_theorem}.

In this work, we introduce and examine a flagified version of the Dowker complex, which we call the \emph{Dowker-Rips complex}.
Just as the Vietoris-Rips complex can be regarded as a less expensive, but approximate variant of the \cechtext complex, the Dowker-Rips complex can be regarded as such a variant of the Dowker complex.
To define the Dowker-Rips complex, we first state a precise definition of flagifications.

\begin{definition}\label{def:k_flagification}
    Given a simplicial complex \(X\), the \emph{flagification of \(X\)}, denoted by \(\FF(X)\), is defined as the simplicial complex that is obtained from \(X\) by including a simplex \(\sigma\subseteq X\) whenever all edges of \(\sigma\) already belong to \(X\) and \(\dim(\sigma)\geq 2\).
    More generally, for an integer \(k\geq 2\), the \emph{\(k\)-flagification of \(X\)}, denoted by \(\FF^{\geq k}(X)\), is defined as the complex that is obtained from \(X\) by including a simplex \(\sigma\subseteq X\) whenever all \((k-1)\)-dimensional faces of \(\sigma\) already belong to \(X\) and \(\dim(\sigma)\geq k\).
\end{definition}

\begin{remark}
    Note that \(X\subseteq\FF^{\geq k}(X)\subseteq\FF(X)\) for any simplicial complex \(X\) and \(k\geq 2\).
    Moreover, we have that \(X=\FF^{\geq k}(X)\) if \(k>\dim(X)+1\), and \(\FF^{\geq 2}(X)=\FF(X)\) for any simplicial complex \(X\).
\end{remark}

\begin{example}\label{exp:k_flagification}
    Let \((X,d)\) be a metric space, and denote by \(\cech_{\varepsilon}(X)\) and \(\vr_{\varepsilon}(X)\) its \cechtext and Vietoris-Rips complexes at some scale \({\varepsilon}\geq 0\), respectively.
    Then we have that \(\vr_{\varepsilon}(X)=\FF(\cech_{\varepsilon}(X))\).
\end{example}

With the definition of flagification at hand, we are now ready to define the Dowker-Rips complex.

\begin{definition}\label{def:dowker_rips_complex}
    Let \(X,Y\) be two finite sets and let \(R\subseteq X\times Y\) be a non-empty relation.
    The \emph{Dowker-Rips complex on \(X\) relative to \(Y\)} is defined as \[
        \DR_{R}(X,Y)\coloneqq\FF(\D_{R}(X,Y)).
    \]
\end{definition}

There are two natural questions that arise:
\begin{enumerate}[label=(\arabic*), ref=(\arabic*)]
    \item\label{question:difference_dr_vs_d} How much can the Dowker-Rips complex differ from the Dowker complex?
    \item\label{question:existence_duality} Does some version of the Dowker duality still hold for Dowker-Rips complexes?
\end{enumerate}

For filtrations of simplicial complexes, questions such as Question~\ref{question:difference_dr_vs_d} are usually answered by showing that the two filtrations are \emph{multiplicatively \(c\)-interleaved} for some \(c\geq 1\).\footnote{
    For the definition of a multiplicative interleaving, see Definition~\ref{def:mult_interleaving}.
}
Informally speaking, the smaller the value of \(c\geq 1\), the closer the two filtrations are.
A prominent example of this is the chain of inclusions
\begin{equation}\label{eq:interleaving_vr_cech}
    \cech_{\varepsilon}(X)\subseteq\vr_{\varepsilon}(X)\subseteq\cech_{2\varepsilon}(X)
\end{equation}
for \(\varepsilon\geq 0\), which translates into the fact that the Vietoris-Rips complex and the \cechtext complex are multiplicatively \(2\)-interleaved.
We show that a similar argument also works for Dowker-Rips and Dowker complexes in the case where \(X\) and \(Y\) are subsets of some metric space \((Z,d)\) with the relation \(R_{\varepsilon}\subseteq X\times Y\) defined by declaring \((x,y)\in R\) iff \(d(x,y)\leq\varepsilon\) for \(\varepsilon\geq 0\).

\begin{restatable}{theorem}{DowkerInterleaving}\label{thm:interleaving_with_dowker}
    Let \(X,Y\subseteq Z\) where \((Z,d)\) is some metric space, and define the relations \(R_{\varepsilon}\subseteq X\times Y\) by declaring \((x,y)\in R\) iff \(d(x,y)\leq\varepsilon\) for \(\varepsilon\geq 0\).
    Denote by \(\D_{\bullet}(X,Y)\) the filtration given by \(\left\{\D_{R_{\varepsilon}}(X,Y)\right\}_{\varepsilon\in\R^{+}}\), and similarly for \(\DR_{\bullet}(X,Y)\).
    Then have that
    \begin{equation}\label{eq:interleaving_with_dowker}
        \D_{\varepsilon}(X,Y)\subseteq\DR_{\varepsilon}(X,Y)\subseteq\D_{3\varepsilon}(X,Y)
    \end{equation}
    for all \(\varepsilon\geq 0\), so that \(\D_{\bullet}(X,Y)\) and \(\DR_{\bullet}(X,Y)\) are multiplicatively $3$-interleaved.
\end{restatable}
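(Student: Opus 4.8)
The first inclusion in~\eqref{eq:interleaving_with_dowker} is immediate from the definitions, since $\D_{\varepsilon}(X,Y)=\D_{R_{\varepsilon}}(X,Y)$ is contained in its own flagification $\FF(\D_{R_{\varepsilon}}(X,Y))=\DR_{\varepsilon}(X,Y)$ by Definitions~\ref{def:k_flagification} and~\ref{def:dowker_rips_complex}. So the content of the theorem lies in the second inclusion, and my plan is a two-step application of the triangle inequality.

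First I would take an arbitrary simplex $\sigma\subseteq X$ of $\DR_{\varepsilon}(X,Y)$ and split into cases. If $\sigma$ already belongs to $\D_{\varepsilon}(X,Y)$, then it also belongs to $\D_{3\varepsilon}(X,Y)$, because $R_{\varepsilon}\subseteq R_{3\varepsilon}$ and hence $\D_{\varepsilon}(X,Y)\subseteq\D_{3\varepsilon}(X,Y)$; this in particular disposes of the low-dimensional simplices. Otherwise, by Definition~\ref{def:k_flagification}, $\sigma$ is a simplex that was introduced during flagification, so $\dim(\sigma)\geq 2$ and every edge $\{x,x'\}\subseteq\sigma$ lies in $\D_{\varepsilon}(X,Y)$; since a simplicial complex contains all faces of its simplices, every vertex of $\sigma$ lies in $\D_{\varepsilon}(X,Y)$ as well.

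The key step is then the following estimate. I would fix a vertex $x_{0}\in\sigma$; since $\{x_{0}\}\in\D_{\varepsilon}(X,Y)$, there is a witness $y_{0}\in Y$ with $d(x_{0},y_{0})\leq\varepsilon$. For any vertex $x\in\sigma$, the edge $\{x_{0},x\}$ lies in $\D_{\varepsilon}(X,Y)$ (trivially if $x=x_{0}$), so there is some $y\in Y$ with $d(x_{0},y)\leq\varepsilon$ and $d(x,y)\leq\varepsilon$, whence $d(x_{0},x)\leq 2\varepsilon$ by the triangle inequality. A second application of the triangle inequality gives $d(x,y_{0})\leq d(x,x_{0})+d(x_{0},y_{0})\leq 3\varepsilon$. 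As $x\in\sigma$ was arbitrary, $y_{0}$ witnesses that $\sigma\in\D_{3\varepsilon}(X,Y)$, which establishes~\eqref{eq:interleaving_with_dowker}.

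Finally, to obtain the claimed multiplicative $3$-interleaving I would feed the chain~\eqref{eq:interleaving_with_dowker}, together with the trivial inclusions $\D_{\varepsilon}(X,Y)\subseteq\D_{3\varepsilon}(X,Y)$ and $\DR_{\varepsilon}(X,Y)\subseteq\DR_{3\varepsilon}(X,Y)$, into homology: the induced maps $\HH_{*}(\D_{\varepsilon}(X,Y))\to\HH_{*}(\DR_{\varepsilon}(X,Y))$ and $\HH_{*}(\DR_{\varepsilon}(X,Y))\to\HH_{*}(\D_{3\varepsilon}(X,Y))$ compose, in both orders, to the structure maps of the two persistence modules, which is exactly the data of a multiplicative $3$-interleaving in the sense of Definition~\ref{def:mult_interleaving}. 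I do not expect a genuine obstacle here: the crux is just the two triangle-inequality bounds above, and the only points requiring a little care are that a flagification contains all faces of its simplices and the separate handling of the case $\dim(\sigma)<2$.
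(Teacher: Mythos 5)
Your proof is correct and follows essentially the same approach as the paper: both the first inclusion from the definition of flagification and the second inclusion via a two-fold application of the triangle inequality using edge witnesses, with the interleaving then read off from the chain of inclusions. The only superficial differences are your (unnecessary but harmless) case split on whether $\sigma$ already lies in $\D_{\varepsilon}(X,Y)$, and your phrasing of the interleaving at the level of homology rather than directly at the level of the filtrations as in Definition~\ref{def:mult_interleaving}.
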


The above result is sharp in the sense that the inclusion \(\DR_{\varepsilon}(X,Y)\subseteq\D_{3\varepsilon}(X,Y)\) does not hold when \(3\) is replaced by some value \(c<3\) (see Proposition~\ref{prop:counterexample_interleaving} for such an example).

We use a similar argument to give a partial answer to Question~\ref{question:existence_duality}.
We point out that the multiplicative interleaving claimed in the following does not stem from a chain of inclusions such as in~\eqref{eq:interleaving_vr_cech} and~\eqref{eq:interleaving_with_dowker}, but rather from the more general notion of a multiplicative interleaving defined in Section~\ref{sec:interleaving}.

\begin{restatable}{theorem}{FlipInterleaving}\label{thm:interleaving_with_flipped}
    Let \(X,Y\subseteq Z\) where \((Z,d)\) is some metric space, and define the relations \(R_{\varepsilon}\subseteq X\times Y\) as in Theorem~\ref{thm:interleaving_with_dowker}, \(\varepsilon\geq 0\).
    Denote by \(\DR_{\bullet}(X,Y)\) the filtration given by \(\left\{\DR_{R_{\varepsilon}}(X,Y)\right\}_{\varepsilon\in\R^{+}}\), and similarly for \(\DR_{\bullet}(Y,X)\).
    Then \(\DR_{\bullet}(X,Y)\) and \(\DR_{\bullet}(Y,X)\) are multiplicatively $3$-interleaved.
\end{restatable}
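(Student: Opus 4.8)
The plan is to obtain the interleaving by composing the inclusions of Theorem~\ref{thm:interleaving_with_dowker} with the \emph{functorial} Dowker duality of Chowdhury and M\'{e}moli~\cite{chowdhury_memoli_functorial_dowker_theorem}. Recall that the latter provides, in each homological degree and for each $\varepsilon\geq 0$, an isomorphism
\[
    \phi_{\varepsilon}\colon\HH_{*}\bigl(\D_{\varepsilon}(X,Y)\bigr)\xrightarrow{\ \cong\ }\HH_{*}\bigl(\D_{\varepsilon}(Y,X)\bigr)
\]
that is natural in $\varepsilon$, i.e.\ that commutes with the maps induced by the inclusions $\D_{\varepsilon}(X,Y)\hookrightarrow\D_{\varepsilon'}(X,Y)$ and $\D_{\varepsilon}(Y,X)\hookrightarrow\D_{\varepsilon'}(Y,X)$ for $\varepsilon\leq\varepsilon'$. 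Put differently, $\phi_{\bullet}$ is an isomorphism of persistence modules $\HH_{*}(\D_{\bullet}(X,Y))\cong\HH_{*}(\D_{\bullet}(Y,X))$, and in particular its inverse $\phi_{\bullet}^{-1}$ is again a natural transformation. (I describe the argument at the level of persistent homology; the same recipe goes through verbatim with the homotopy-functorial form of Dowker duality if one prefers to work with homotopy types.)

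Using this, I would define the two families of interleaving maps as follows. Theorem~\ref{thm:interleaving_with_dowker} gives inclusions $\D_{\varepsilon}(X,Y)\subseteq\DR_{\varepsilon}(X,Y)\subseteq\D_{3\varepsilon}(X,Y)$ and, exchanging the roles of $X$ and $Y$, inclusions $\D_{\varepsilon}(Y,X)\subseteq\DR_{\varepsilon}(Y,X)\subseteq\D_{3\varepsilon}(Y,X)$. For $\varepsilon\geq 0$ put
\[
    F_{\varepsilon}\colon\HH_{*}\bigl(\DR_{\varepsilon}(X,Y)\bigr)\longrightarrow\HH_{*}\bigl(\DR_{3\varepsilon}(Y,X)\bigr),\qquad F_{\varepsilon}\coloneqq\iota_{*}\circ\phi_{3\varepsilon}\circ\iota_{*},
\]
where the two maps denoted $\iota_{*}$ are those induced by the inclusions $\DR_{\varepsilon}(X,Y)\hookrightarrow\D_{3\varepsilon}(X,Y)$ and $\D_{3\varepsilon}(Y,X)\hookrightarrow\DR_{3\varepsilon}(Y,X)$ supplied by Theorem~\ref{thm:interleaving_with_dowker}; define $G_{\varepsilon}\colon\HH_{*}(\DR_{\varepsilon}(Y,X))\to\HH_{*}(\DR_{3\varepsilon}(X,Y))$ by the symmetric formula with $\phi_{3\varepsilon}^{-1}$ in place of $\phi_{3\varepsilon}$. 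That $F_{\bullet}$ and $G_{\bullet}$ each commute with the structure maps of the persistence modules involved is immediate from functoriality of homology together with naturality of $\phi_{\bullet}$.

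It then remains to verify the two interleaving triangles, and by symmetry it suffices to treat $G_{3\varepsilon}\circ F_{\varepsilon}$ and show it equals the structure map $\HH_{*}(\DR_{\varepsilon}(X,Y))\to\HH_{*}(\DR_{9\varepsilon}(X,Y))$. Expanding the composite, its \emph{inner} portion $\HH_{*}(\D_{3\varepsilon}(Y,X))\to\HH_{*}(\DR_{3\varepsilon}(Y,X))\to\HH_{*}(\D_{9\varepsilon}(Y,X))$ is, by functoriality and the nestedness $\D_{3\varepsilon}(Y,X)\subseteq\DR_{3\varepsilon}(Y,X)\subseteq\D_{9\varepsilon}(Y,X)$ from Theorem~\ref{thm:interleaving_with_dowker}, nothing but the map induced by the inclusion $\D_{3\varepsilon}(Y,X)\hookrightarrow\D_{9\varepsilon}(Y,X)$. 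Naturality of $\phi_{\bullet}$ then lets me rewrite $\phi_{9\varepsilon}^{-1}\circ\bigl(\D_{3\varepsilon}(Y,X)\hookrightarrow\D_{9\varepsilon}(Y,X)\bigr)_{*}\circ\phi_{3\varepsilon}$ as $\bigl(\D_{3\varepsilon}(X,Y)\hookrightarrow\D_{9\varepsilon}(X,Y)\bigr)_{*}$, so that $G_{3\varepsilon}\circ F_{\varepsilon}$ becomes the map induced by the chain of inclusions $\DR_{\varepsilon}(X,Y)\subseteq\D_{3\varepsilon}(X,Y)\subseteq\D_{9\varepsilon}(X,Y)\subseteq\DR_{9\varepsilon}(X,Y)$; since the composite of these inclusions is just $\DR_{\varepsilon}(X,Y)\hookrightarrow\DR_{9\varepsilon}(X,Y)$, the triangle commutes. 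The triangle for $F_{3\varepsilon}\circ G_{\varepsilon}$ is identical after exchanging $X$ and $Y$, and hence $\HH_{*}(\DR_{\bullet}(X,Y))$ and $\HH_{*}(\DR_{\bullet}(Y,X))$ are multiplicatively $3$-interleaved.

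The only genuinely nontrivial point — and where I expect the bulk of the care to be needed — is the very first step: one must check that the Chowdhury--M\'{e}moli functorial Dowker theorem applies to the filtration by the relations $R_{\varepsilon}$ so as to yield a bona fide natural isomorphism of persistence modules, rather than merely a degreewise isomorphism at each scale that is incompatible with the transition maps. Once this is granted, the rest of the argument is a diagram chase that uses only functoriality of homology and the nestedness established in Theorem~\ref{thm:interleaving_with_dowker}.
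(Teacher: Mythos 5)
Your proposal is correct and follows essentially the same route as the paper: in both cases the interleaving maps are obtained by sandwiching the Chowdhury--M\'emoli functorial Dowker duality between the inclusions $\D_{\varepsilon}\subseteq\DR_{\varepsilon}\subseteq\D_{3\varepsilon}$ from Theorem~\ref{thm:interleaving_with_dowker}, and the commutativity of the interleaving diagrams is then a diagram chase using functoriality/naturality. The only (minor) difference is the level at which the argument is phrased: the paper constructs the interleaving maps $\varphi_{\varepsilon}$ directly between the filtration objects, as continuous maps built from the simplicial map $\Gamma$ on the barycentric subdivision, so as to literally meet Definition~\ref{def:mult_interleaving}; you instead work at the level of persistence modules, taking $\phi_{\bullet}$ to be the natural isomorphism $\HH_{*}(\D_{\bullet}(X,Y))\cong\HH_{*}(\D_{\bullet}(Y,X))$. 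You flag this distinction yourself, and the worry you single out at the end --- that $\phi_{\bullet}$ really is natural in $\varepsilon$ rather than just a pointwise isomorphism --- is indeed the content of \cite[Theorem~3]{chowdhury_memoli_functorial_dowker_theorem} and is exactly what the paper relies on via the functoriality of~$\Gamma$ (with consistent choices of $y_{\sigma}$), so no gap remains.
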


While this already establishes that \(\DR_{\bullet}(X,Y)\) and \(\DR_{\bullet}(Y,X)\) cannot be ``too different'', it is still a significantly weaker guarantee than the one we have for Dowker complexes, where we have a homotopy equivalence and thus an isomorphism at the level of persistent homology.
Indeed, as we will see in Section~\ref{sec:duality}, an isomorphism at the level of persistent homologies of \(\DR_{\bullet}(X,Y)\) and \(\DR_{\bullet}(Y,X)\) does not exist in general.
Nevertheless, we still obtain an isomorphism at the level of persistent homology when restricted to homological dimensions \(0\) and \(1\).
This follows from a slightly more general result on \(k\)-flagifications of Dowker complexes.

\begin{restatable}{theorem}{ThmMainTechnical}\label{thm:main_technical}
    Let \(X\) and \(Y\) be two finite sets and let \(\left\{R_{j}\right\}_{j\in J}\) be a sequence of relations such that \(R_{j}\subseteq X\times Y\) for all \(j\in J\), and \(R_{j}\subseteq R_{j'}\) whenever \(j\leq j'\), where \(J\) is some totally ordered index set.
    Given an integer \(k\geq 2\), denote by \(\FF^{\geq k}(\D_{\bullet}(X,Y))\) the filtration given by \(\left\{\FF^{\geq k}(\D_{R_{j}}(X,Y))\right\}_{j\in J}\), and similarly for \(\FF^{\geq k}(\D_{\bullet}(Y,X))\).
    Then we have that \[
        \PH_{i}(\FF^{\geq k}(\D_{\bullet}(X,Y)))\cong\PH_{i}(\FF^{\geq k}(\D_{\bullet}(Y,X)))
    \] for \(i=0,\dots,k-1\).
\end{restatable}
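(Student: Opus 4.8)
The plan is to derive the statement from the functorial Dowker theorem of Chowdhury and M\'emoli~\cite{chowdhury_memoli_functorial_dowker_theorem} together with two elementary observations about $k$-flagification. Recall that the functorial Dowker theorem supplies, for every relation $R\subseteq X\times Y$, isomorphisms $\Phi_{R}^{i}\colon\HH_{i}(\D_{R}(X,Y))\xrightarrow{\ \sim\ }\HH_{i}(\D_{R}(Y,X))$ that are natural in $R$ with respect to inclusions of relations; restricting along the diagram $\{R_{j}\}_{j\in J}$ then yields isomorphisms of persistence modules $\PH_{i}(\D_{\bullet}(X,Y))\cong\PH_{i}(\D_{\bullet}(Y,X))$. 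The first observation about flagification is that $\FF^{\geq k}(K)$ has the same $(k-1)$-skeleton as $K$, so the inclusion $K\hookrightarrow\FF^{\geq k}(K)$ induces an isomorphism on $\HH_{i}$ for $i\leq k-2$ (since $\HH_{i}$ depends only on the $(i+1)$-skeleton and $i+1\leq k-1$). The second observation is that on $\HH_{k-1}$ this inclusion induces the canonical surjection $\HH_{k-1}(K)\twoheadrightarrow\HH_{k-1}(K)/N(K)$, where $N(K)\subseteq\HH_{k-1}(K)$ is the subgroup generated by the classes $[\partial\sigma]$ of the \emph{empty $k$-simplices} of $K$ -- the $(k+1)$-element vertex sets $\sigma$ all of whose $k$-element subsets are simplices of $K$ but which are themselves not simplices of $K$: indeed, $\FF^{\geq k}(K)$ has the same $(k-1)$-cycles as $K$ and its $(k-1)$-boundaries are enlarged exactly by $\langle\partial\sigma:\sigma\text{ an empty }k\text{-simplex of }K\rangle$, while simplices of dimension $\geq k+1$ are irrelevant for $\HH_{k-1}$. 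Both observations are natural in $K$, so for $i\leq k-2$ we obtain at once
\[
\PH_{i}(\FF^{\geq k}(\D_{\bullet}(X,Y)))\cong\PH_{i}(\D_{\bullet}(X,Y))\cong\PH_{i}(\D_{\bullet}(Y,X))\cong\PH_{i}(\FF^{\geq k}(\D_{\bullet}(Y,X))),
\]
and for $i=k-1$ it remains only to show that $\Phi_{R_{j}}^{k-1}$ carries $N(\D_{R_{j}}(X,Y))$ onto $N(\D_{R_{j}}(Y,X))$ for every $j$; granting this, $\Phi_{\bullet}^{k-1}$ restricts to an isomorphism between the corresponding submodules and hence descends to an isomorphism of the quotients $\PH_{k-1}(\FF^{\geq k}(\D_{\bullet}(X,Y)))$ and $\PH_{k-1}(\FF^{\geq k}(\D_{\bullet}(Y,X)))$.

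For the remaining claim it suffices, by the symmetry of the set-up under exchanging $X$ and $Y$, to prove the single inclusion $\Phi_{R_{j}}^{k-1}(N(\D_{R_{j}}(X,Y)))\subseteq N(\D_{R_{j}}(Y,X))$, and since $N(\D_{R_{j}}(X,Y))$ is generated by the classes $[\partial\sigma]$, we fix an empty $k$-simplex $\sigma=\{x_{0},\dots,x_{k}\}$ of $\D_{R_{j}}(X,Y)$. For each $l$ choose a witness $y_{l}\in Y$ for the facet $\sigma\setminus\{x_{l}\}$, i.e.\ with $(x_{i},y_{l})\in R_{j}$ for all $i\neq l$; these are automatically pairwise distinct, because $y_{l}=y_{l'}$ with $l\neq l'$ would force $(x_{i},y_{l})\in R_{j}$ for every $i$, contradicting $\sigma\notin\D_{R_{j}}(X,Y)$. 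Now consider the sub-relation $R'\coloneqq\{(x_{i},y_{l}):0\leq i,l\leq k,\ i\neq l\}\subseteq R_{j}$. A direct check shows that $\D_{R'}(X,Y)$ is the boundary complex $\partial\Delta^{k}$ on the vertices $\{x_{0},\dots,x_{k}\}$ and $\D_{R'}(Y,X)$ is $\partial\Delta^{k}$ on $\tau\coloneqq\{y_{0},\dots,y_{k}\}$, both copies of $S^{k-1}$; hence $\Phi_{R'}^{k-1}$ is an isomorphism $\Z\xrightarrow{\sim}\Z$ sending the fundamental class $[\partial\sigma]$ to $\pm[\partial\tau]$.

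Naturality of $\Phi^{k-1}$ along $R'\hookrightarrow R_{j}$ gives a commuting square whose vertical maps, induced by $\D_{R'}(X,Y)\hookrightarrow\D_{R_{j}}(X,Y)$ and $\D_{R'}(Y,X)\hookrightarrow\D_{R_{j}}(Y,X)$, send $[\partial\sigma]\mapsto[\partial\sigma]$ and $[\partial\tau]\mapsto[\partial\tau]$ respectively; therefore $\Phi_{R_{j}}^{k-1}([\partial\sigma])=\pm[\partial\tau]$ in $\HH_{k-1}(\D_{R_{j}}(Y,X))$. Finally, $\tau$ is either a simplex of $\D_{R_{j}}(Y,X)$ or an empty $k$-simplex of it, because each facet $\tau\setminus\{y_{l}\}$ is a simplex of $\D_{R_{j}}(Y,X)$ with witness $x_{l}$ (as $(x_{l},y_{m})\in R_{j}$ for all $m\neq l$). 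In the first case $[\partial\tau]=0$ and in the second $[\partial\tau]\in N(\D_{R_{j}}(Y,X))$ by definition, so in either case $\Phi_{R_{j}}^{k-1}([\partial\sigma])\in N(\D_{R_{j}}(Y,X))$, which finishes the $i=k-1$ case and thereby the proof.

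I expect the parts demanding the most care to be, first, making the identification $\HH_{k-1}(\FF^{\geq k}(K))\cong\HH_{k-1}(K)/N(K)$ and checking it is natural in $K$ -- one must verify that flagification introduces no relations in degree $k-1$ beyond the boundaries of empty $k$-simplices, and that the resulting quotients assemble into a morphism of persistence modules -- and, second, invoking the functorial Dowker theorem in its sharp form, namely that the duality isomorphisms are natural in the relation; it is precisely this naturality that makes the reduction to the small sub-relation $R'$ legitimate and so lets the argument run at the level of persistent homology rather than merely homology.
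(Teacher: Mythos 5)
Your proof is correct, but it takes a genuinely different route from the paper's. The paper argues geometrically: Lemma~\ref{lemma:main} extends the Chowdhury--M\'emoli homotopy equivalence $\psi$ to a continuous map $\varphi$ between the partial $k$-flagifications (by showing the relevant target subcomplex $C_\sigma^Y$ is collapsible, hence contractible), and Proposition~\ref{prop:main} then shows $\varphi$ induces a surjection on $\HH_i$ for $i\leq k-1$ (because $\iota^Y$ does, as it attaches only $k$-cells), whence an isomorphism by swapping $X$ and $Y$ and invoking finiteness. Your argument is instead algebraic at the chain level: you identify $\HH_{k-1}(\FF^{\geq k}(K))$ as the quotient $\HH_{k-1}(K)/N(K)$ by the submodule generated by boundaries of ``empty $k$-simplices,'' observe naturality of this identification, and then show the Dowker isomorphism $\Phi$ carries $N(\D_R(X,Y))$ into $N(\D_R(Y,X))$ by restricting the functorial Dowker theorem to a carefully chosen minimal sub-relation $R'$ on which both Dowker complexes become copies of $\partial\Delta^k$. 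This is an elegant reduction; it avoids constructing any continuous extension, and it makes the kernel of the comparison map completely explicit, which the paper's surjectivity argument does not.

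A few points worth tightening, none of which break the argument. First, when you invoke ``symmetry'' to get equality $\Phi_j(N^X)=N^Y$ from the one inclusion, what you actually get is a second (a priori unrelated) isomorphism $\Psi_j$ with $\Psi_j(N^Y)\subseteq N^X$; over a field one concludes $\dim N^X\leq\dim N^Y$ and $\dim N^Y\leq\dim N^X$, so the inclusion $\Phi_j(N^X)\subseteq N^Y$ is an equality of subspaces of equal dimension. (The paper's Proposition~\ref{prop:main} uses the same finite-dimensionality device, so this is in the spirit of the source.) Second, the naturality of $K\mapsto N(K)$ deserves one sentence: if $K\hookrightarrow K'$ and $\sigma$ is an empty $k$-simplex of $K$, then in $K'$ either $\sigma$ has become a genuine simplex (so $[\partial\sigma]=0$) or it is still empty (so $[\partial\sigma]\in N(K')$), and in either case the quotient maps commute. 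Third, your use of the functorial Dowker theorem implicitly requires $R'$ to be non-empty, which holds since $k\geq 2$; worth noting, since the theorem's hypotheses require it. With these small clarifications your write-up would stand as a complete alternative proof.
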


\begin{remark}
    For large enough \(k\geq 1\), we have that \(\FF^{\geq k}(\D_{R}(X,Y))=\D_{R}(X,Y)\) and \(\FF^{\geq k}(\D_{R}(Y,X))=\D_{R}(Y,X)\).
    For such choices of \(k\), Theorem~\ref{thm:main_technical} is essentially a homological (and hence weaker) restatement of~\cite[Theorem 3]{chowdhury_memoli_functorial_dowker_theorem}.
    Indeed, Theorem~\ref{thm:main_technical} may be read as saying that there exists a decreasing sequence of filtrations \[
        \DR_{\bullet}(X,Y)=\FF^{\geq 2}(\D_{\bullet}(X,Y))\supseteq\cdots\supseteq\FF^{\geq k}(\D_{\bullet}(X,Y))\supseteq\FF^{\geq k'}(\D_{\bullet}(X,Y))\supseteq\cdots\supseteq\D_{\bullet}(X,Y)
    \] for \(k<k'\), in which the number of dimensions for which Dowker duality holds increases by \(1\) at each step.
\end{remark}

Using the fact that the Dowker-Rips complex is the 2-flagification of the Dowker complex, we get the following \emph{Dowker-Rips duality}.

\begin{restatable}{theorem}{ThmMainPractical}\label{thm:main_practical}
    Let \((Z,d)\) be a metric space and let \(X,Y\subseteq Z\) be non-empty and finite disjoint subsets.
    For \(\varepsilon\geq 0\), define the relation \(R_{\varepsilon}\subseteq X\times Y\) by \[
        (x,y)\in R_{\varepsilon}\quad\text{iff}\quad d(x,y)\leq\varepsilon.
    \]
    Denote by \(\DR_{\bullet}(X,Y)\) the filtration given by \(\left\{\DR_{R_{\varepsilon}}(X,Y)\right\}_{\varepsilon\in\R^{+}}\), and similarly for \(\DR_{\bullet}(Y,X)\).
    Then we have that \[
        \PH_{i}(\DR_{\bullet}(X,Y))\cong\PH_{i}(\DR_{\bullet}(Y,X))
    \] for \(i=0,1\).
\end{restatable}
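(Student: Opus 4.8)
The plan is to obtain Theorem~\ref{thm:main_practical} directly as the case $k = 2$ of Theorem~\ref{thm:main_technical}. The bridge is the identity $\DR_R(X,Y) = \FF(\D_R(X,Y)) = \FF^{\geq 2}(\D_R(X,Y))$, which combines Definition~\ref{def:dowker_rips_complex} with the equality $\FF^{\geq 2} = \FF$ recorded in the remark after Definition~\ref{def:k_flagification}. Since all of the genuine mathematical content is carried by Theorem~\ref{thm:main_technical}, the task here is to check that its hypotheses hold for the family $\{R_\varepsilon\}$ and that its conclusion specialises to the claimed statement.

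First I would verify that $\{R_\varepsilon\}_{\varepsilon \in \R^{+}}$ satisfies the requirements of Theorem~\ref{thm:main_technical}: the index set $\R^{+}$ is totally ordered, and the relations are nested, since $\varepsilon \le \varepsilon'$ together with $d(x,y) \le \varepsilon$ forces $d(x,y) \le \varepsilon'$, i.e.\ $R_\varepsilon \subseteq R_{\varepsilon'}$. The one point worth spelling out is that Definition~\ref{def:dowker_complex} only defines $\D_R$ for a non-empty relation. As $X$ and $Y$ are finite, non-empty and disjoint, the value $\varepsilon_0 \coloneqq \min_{x \in X,\, y \in Y} d(x,y)$ is strictly positive and $R_\varepsilon = \emptyset$ exactly for $\varepsilon < \varepsilon_0$; for those $\varepsilon$ one sets $\DR_{R_\varepsilon}(X,Y) = \DR_{R_\varepsilon}(Y,X) = \emptyset$, so the two filtrations agree trivially below $\varepsilon_0$, and for $\varepsilon \ge \varepsilon_0$ the relation $R_\varepsilon$ is non-empty and Theorem~\ref{thm:main_technical} applies verbatim. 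As in that theorem, $\D_{R_\varepsilon}(Y,X)$ is read via the transpose relation $R_\varepsilon^{T} \subseteq Y \times X$.

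With the hypotheses in place, I would invoke Theorem~\ref{thm:main_technical} with $k = 2$. Since $\FF^{\geq 2} = \FF$, we have $\FF^{\geq 2}(\D_{R_\varepsilon}(X,Y)) = \DR_{R_\varepsilon}(X,Y)$ and $\FF^{\geq 2}(\D_{R_\varepsilon}(Y,X)) = \DR_{R_\varepsilon}(Y,X)$ for every $\varepsilon$, so the two filtrations in the statement of Theorem~\ref{thm:main_technical} are precisely $\DR_\bullet(X,Y)$ and $\DR_\bullet(Y,X)$. Its conclusion then reads $\PH_i(\DR_\bullet(X,Y)) \cong \PH_i(\DR_\bullet(Y,X))$ for $i = 0, \dots, k - 1 = 1$, which is exactly Theorem~\ref{thm:main_practical}; the ceiling of $i = 1$ is forced by the choice $k = 2$, the coarsest flagification, in line with the remark following Theorem~\ref{thm:main_technical}. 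Accordingly, there is no substantive obstacle at this stage: the only real difficulty lies upstream, in establishing Theorem~\ref{thm:main_technical} itself -- that flagifying a filtration of Dowker complexes to dimension $\geq k$ preserves persistent homology through degree $k-1$ -- which is handled separately.
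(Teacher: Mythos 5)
Your proposal matches the paper's proof exactly: both obtain the result as the $k=2$ specialisation of Theorem~\ref{thm:main_technical}, using $\DR_R(X,Y)=\FF^{\geq 2}(\D_R(X,Y))$. The extra care you take about nestedness of $\{R_\varepsilon\}$ and the edge case of an empty relation is sound but not needed beyond what the paper leaves implicit.
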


The above result is sharp in the sense that its conclusion does not hold for homological dimensions higher than \(1\) (see Proposition~\ref{prop:counterexample_duality} for such an example).
Nevertheless, the Dowker-Rips duality is a desirable property of the Dowker-Rips complex, since, in practice, persistent homology is often computed only up to homological dimension \(1\) for reasons of computational complexity.
In these homological dimensions, the Dowker-Rips duality thus accelerates the computation of the persistent homology of the Dowker-Rips complex: like in the case of the Dowker complex, this duality allows one to potentially swap the roles of \(X\) and \(Y\) in order to compute the less expensive variant of the two Dowker-Rips complexes.
Indeed, we provide an applied example of a machine learning classification pipeline in which simply replacing the Dowker complex with the Dowker-Rips complex leads to speed gains, all while not negatively affecting classification performance

This paper is organized as follows: in Section~\ref{sec:interleaving}, we construct the multiplicative interleavings proving Theorems~\ref{thm:interleaving_with_dowker} and~\ref{thm:interleaving_with_flipped}.
In Section~\ref{sec:duality}, which is the main technical section, we prove the Dowker-Rips duality (Theorem~\ref{thm:main_practical}).
Finally, in Section~\ref{sec:dowker_rips_as_drop_in_replacement}, we present the application that justifies using the Dowker-Rips complex instead of the Dowker complex in practice.

%%%%%%%%%%%%%%%%%%%%%%%%%%%%%%%%%%%%%%%%%%%%%%%%%%%%%%

\section{Multiplicative interleavings}\label{sec:interleaving}

Interleavings are a way to capture similarities of filtrations.
While in many cases \emph{additive interleavings} are desirable, in some cases multiplicative interleavings are the best that can be done.
Following we recall the definition of a multiplicative interleaving (see e.g.~\ \cite{dey_wang_computational_topology,oudot_persistence_theory}).

\begin{definition}\label{def:mult_interleaving}
Let $\mathcal{F}=\{F_a\}_{a\in\R}$ and $\mathcal{G}=\{G_a\}_{a\in\R}$ be filtrations.
We say that $\mathcal{F}$ and $\mathcal{G}$ are \emph{multiplicatively $c$-interleaved} if there are maps $\varphi_a: F_a\rightarrow G_{ca}$ and $\psi_a: G_a\rightarrow F_{ca}$ such that the following diagrams commute for every $a\in\R$ and $\varepsilon>0$:

\[
    \begin{tikzcd}
        U_a\arrow[rr, hookrightarrow]\arrow[rd, "\varphi_a"] & & U_{a+\varepsilon}\arrow[rd, "\varphi_{a+\varepsilon}"] \\
        & V_{ca}\arrow[rr, hookrightarrow] & & V_{c(a+\varepsilon)}
        \end{tikzcd} \text{ and }
        \begin{tikzcd}
        & U_{ca}\arrow[rr, hookrightarrow] & & U_{c(a+\varepsilon)} \\
        V_{a}\arrow[ru, "\psi_a"]\arrow[rr, hookrightarrow] & & V_{a+\varepsilon}\arrow[ru, "\psi_{a+\varepsilon}"]
    \end{tikzcd}
\]

\[
    \begin{tikzcd}
        U_a\arrow[rr, hookrightarrow]\arrow[rd, "\varphi_a"] & & U_{c^2a} \\
        & V_{ca}\arrow[ru, "\psi_{ca}"] &
        \end{tikzcd} \text{ and }
        \begin{tikzcd}
        & U_{ca}\arrow[rd, "\varphi_{ca}"] & \\
        V_a\arrow[ru, "\psi_a"]\arrow[rr, hookrightarrow] & & V_{c^2a}
    \end{tikzcd}
\]
\end{definition}

One prominent example of this is the multiplicative \(2\)-interleaving of the \cechtext filtration and the Vietoris-Rips filtration stemming from the inclusions \[
    \cech_{\varepsilon}(X)\subseteq\vr_{\varepsilon}(X)\subseteq\cech_{2\varepsilon}(X)
\] for \(\varepsilon\geq 0\), as was alluded to in the introduction.

We now prove the existence of the two multiplicative interleavings claimed in Section~\ref{sec:introduction}.

\DowkerInterleaving*

\begin{proof}
    It suffices to show that \[
        \D_{\varepsilon}(X,Y)\subseteq\DR_{\varepsilon}(X,Y)\subseteq\D_{3\varepsilon}(X,Y)
    \] for all \(\varepsilon\geq 0\); by defining $\varphi_{\varepsilon}$ and $\psi_{\varepsilon}$ as inclusions, the commutativity of the required diagrams then follows immediately.

    Let \(\varepsilon\geq 0\).
    The inclusion \(\D_{\varepsilon}(X,Y)\subseteq\DR_{\varepsilon}(X,Y)\) is immediate from the definition of \(\DR_{\varepsilon}(X,Y)\) as the flagification of \(\D_{\varepsilon}(X,Y)\).

    Suppose now that \(\DR_{\varepsilon}(X,Y)\) contains some simplex \(\sigma=[x_{0},\dots,x_{n}]\), where \(x_{0},\dots,x_{n}\in X\).
    By definition, this means that for any \(x_{i},x_{j}\in\sigma\) there exists an element \(y_{ij}\in Y\) such that \(d(x_{i},y_{ij})\leq\varepsilon\) and \(d(x_{j},y_{ij})\leq\varepsilon\).
    Now, given any \(x_{i}\in\sigma\), we have that
    \begin{align*}
        d(x_{i},y_{kl})&\leq d(x_{i},x_{k})+d(x_{k},y_{kl})\\
        &\leq d(x_{i},y_{ki})+d(y_{ki},x_{k})+d(x_{k},y_{kl})\\
        &\leq 3\varepsilon
    \end{align*}
    for any \(0\leq k<j\leq n\).
    Hence \(\sigma\in\D_{3\varepsilon}(X,Y)\), as claimed.
\end{proof}

\FlipInterleaving*

\begin{proof}
    Consider the following chain of maps
    \begin{equation*}
        \begin{tikzcd}
            \DR_{\varepsilon}(X,Y) \arrow[r, hook, "\iota_{\DR,\D}^{\varepsilon}"] &
            \D_{3\varepsilon}(X,Y) \arrow[r, "\iota^{(1)}"] &
            \D_{3\varepsilon}^{(1)}(X,Y) \arrow[r, "\Gamma"] &
            \D_{3\varepsilon}(Y,X) \arrow[r, hook, "\iota_{\D,\DR}^{3\varepsilon}"] &
            \DR_{3\varepsilon}(Y,X),
        \end{tikzcd}
    \end{equation*}
    where $\iota_{\DR,\D}^{\varepsilon}$ and $\iota_{\D,\DR}^{3\varepsilon}$ denote the inclusion maps from Theorem~\ref{thm:interleaving_with_dowker}, $\iota^{(1)}$ denotes the inclusion of the respective complex into its first barycentric subdivision, and where \(\Gamma\) denotes the simplicial map from~\cite{chowdhury_memoli_functorial_dowker_theorem}.
    We define $\varphi_{\varepsilon}:=\iota_{\D,\DR}^{3\varepsilon}\circ\Gamma\circ\iota^{(1)}\circ\iota_{\DR,\D}^{\varepsilon}$.
    The functions $\psi_{\varepsilon}$ are defined symmetrically.

    Consider first the following diagram:
    \begin{equation}\label{eq:interleaving_with_flipped_triangle}
        \begin{tikzcd}
            \DR_{\varepsilon}(X,Y) \arrow[rrrr, hook] \arrow[dr, hook, "\iota_{\DR,\D}^{\varepsilon}"] & & & & \DR_{9\varepsilon}(X,Y) \\
            & \D_{3\varepsilon}(X,Y) \arrow[d, hook, "\iota^{(1)}"] & & \D_{9\varepsilon}(X,Y) \arrow[ur, hook, "\iota_{\D,\DR}^{9\varepsilon}"] & \\
            & \D_{3\varepsilon}^{(1)}(X,Y) \arrow[d, "\Gamma"] & & \D_{9\varepsilon}^{(1)}(X,Y) \arrow[u, "\Gamma"] & \\
            & \D_{3\varepsilon}(Y,X) \arrow[dr, hook, "\iota_{\D,\DR}^{3\varepsilon}"] & & \D_{9\varepsilon}(Y,X) \arrow[u, hook, "\iota^{(1)}"] & \\
            & & \DR_{3\varepsilon}(Y,X) \arrow[ur, hook, "\iota_{\DR,\D}^{3\varepsilon}"] & &
        \end{tikzcd}
    \end{equation}
    By definition of $\varphi_{\varepsilon}$ and $\psi_{\varepsilon}$, this is exactly the triangular diagram required for multiplicative interleavings.
    It follows from functoriality of $\Gamma$ established in \cite{chowdhury_memoli_functorial_dowker_theorem} together with the fact that all other maps are inclusion maps that this diagram commutes.

    Similarly, the relevant trapezoidal diagram is the following:
    \begin{equation}\label{eq:interleaving_with_flipped_trapezoid}
        \begin{tikzcd}
            \DR_{\varepsilon}(X,Y) \arrow[rrr, hook] \arrow[dr, hook, "\iota_{\DR,\D}^{\varepsilon}"] & & & \DR_{\varepsilon+\varepsilon'}(X,Y)\arrow[dr, hook, "\iota_{\DR,\D}^{\varepsilon+\varepsilon'}"] & \\
            & \D_{3\varepsilon}(X,Y) \arrow[d, hook, "\iota^{(1)}"] & & & \D_{3(\varepsilon+\varepsilon')}(X,Y) \arrow[d, hook, "\iota^{(1)}"] \\
            & \D_{3\varepsilon}^{(1)}(X,Y) \arrow[d, "\Gamma"] & & & \D_{3(\varepsilon+\varepsilon')}^{(1)}(X,Y) \arrow[d, "\Gamma"] \\
            & \D_{3\varepsilon}(Y,X) \arrow[d, hook, "\iota_{\D,\DR}^{3\varepsilon}"] & & & \D_{3(\varepsilon+\varepsilon')}(Y,X) \arrow[d, hook, "\iota_{\D,\DR}^{3(\varepsilon+\varepsilon')}"] \\
            & \DR_{3\varepsilon}(X,Y) \arrow[rrr, hook] & & & \DR_{3(\varepsilon+\varepsilon')}(X,Y)
        \end{tikzcd}
    \end{equation}
    Again, this diagram commutes by functoriality of $\Gamma$ and the fact that all other maps are inclusion maps.
\end{proof}

We conclude this section by providing an example illustrating that the interleaving from Theorem~\ref{thm:interleaving_with_dowker} is sharp in the sense that the inclusion \(\DR_{\varepsilon}(X,Y)\subseteq\D_{3\varepsilon}(X,Y)\) does not hold when \(3\) is replaced by some value \(c<3\).

\begin{proposition}\label{prop:counterexample_interleaving}
    There exists a setting for Theorem~\ref{thm:interleaving_with_dowker} such that \[
        \DR_{\varepsilon}(X,Y)\not\subseteq\D_{c\varepsilon}(X,Y)
    \] for any \(c<3\).
\end{proposition}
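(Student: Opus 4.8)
The plan is to exhibit a small, explicit configuration of points in a metric space in which two points $x_0, x_1 \in X$ are linked by an edge in $\D_\varepsilon(X,Y)$ (hence the edge, and thus any simplex it spans together with suitable other edges, survives into the flagification) even though $d(x_0, x_1)$ is as close to $3\varepsilon$ as we like, so that $[x_0,x_1] \notin \D_{c\varepsilon}(X,Y)$ for any $c < 3$ once the right witnesses are arranged. Concretely, I would take $Z = \R$ (or a path metric on a short graph) with $\varepsilon = 1$, place $y_0 = 0$ and $y_1 = 2$ in $Y$, and place $x_0$ near $1$ on one side and $x_1$ near $1$ on the other so that $d(x_0,y_0) \le 1$ and $d(x_1,y_1)\le 1$ — wait, that only gives distance about $2$. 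The correct trick, which matches the $3\varepsilon$ bound in the proof of Theorem~\ref{thm:interleaving_with_dowker}, is to chain \emph{three} unit steps: I want an edge $[x_0,x_1]$ in $\DR_\varepsilon(X,Y)$ to arise from a triangle $[x_0,x_1,x_2]$ whose three edges are individually witnessed, but where the pairwise witnesses are different elements of $Y$, forcing $x_0$ and $x_1$ to be far apart.

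First I would set up the configuration: pick $X = \{x_0, x_1, x_2\}$ and $Y = \{y_{01}, y_{12}, y_{02}\}$, and choose the relation $R_\varepsilon$ so that $y_{ij}$ is the common witness for the pair $\{x_i, x_j\}$, i.e. $d(x_i, y_{ij}) \le \varepsilon$ and $d(x_j, y_{ij}) \le \varepsilon$, but no single $y \in Y$ is within $\varepsilon$ of all three $x_i$. Then all three edges $[x_0,x_1], [x_1,x_2], [x_0,x_2]$ lie in $\D_\varepsilon(X,Y)$, so the triangle $[x_0,x_1,x_2]$ lies in the flagification $\DR_\varepsilon(X,Y)$, and in particular the edge $[x_0,x_1] \in \DR_\varepsilon(X,Y)$. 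Second, I would compute the distances forced by the chain $d(x_0, y_{02}) \le d(x_0,y_{01}) + d(y_{01}, x_1) + d(x_1, y_{12}) + \dots$ — more directly, following the triangle-inequality chain in the earlier proof, I would arrange the points on a line so that, going $x_0 \to y_{01} \to x_1 \to y_{12}$ uses up three $\varepsilon$-hops and lands $d(x_0, x_1)$ (or rather $d(x_0, y_{12})$) arbitrarily close to $3\varepsilon$; concretely on $\R$ take $x_0 = 0$, $y_{01} = \varepsilon$, $x_1 = 2\varepsilon$, $y_{12} = 3\varepsilon - \delta$ hmm — I need $d(x_1, y_{12}) \le \varepsilon$, so $y_{12} = 3\varepsilon$, and then $x_2$ must satisfy $d(x_1,y_{12}) \le \varepsilon$ and $d(x_2, y_{12}) \le \varepsilon$ and also $d(x_0, y_{02}), d(x_2,y_{02}) \le \varepsilon$ with $y_{02}$ chosen to make $d(x_0,x_2)$ moderate — and check that no common witness exists. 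Third, I would verify that for this configuration $d(x_0, x_1) = 2\varepsilon$ is not yet good enough (it gives $c = 2$, not $c$ arbitrarily close to $3$), so I actually need the far pair to be $\{x_0, x_2\}$ witnessed only via the degenerate path through the line, pushing $d(x_0, x_2) \to 3\varepsilon$. So the refined configuration: $x_0$ and $x_2$ at distance slightly less than $3\varepsilon$, linked in the flag complex through the intermediate $x_1$, with $y_{01}$ witnessing $\{x_0,x_1\}$, $y_{12}$ witnessing $\{x_1, x_2\}$, and the edge $[x_0,x_2]$ appearing only by flagification — but wait, flagification of a single triangle needs all three edges present, so $[x_0,x_2]$ itself must be in $\D_\varepsilon$, contradiction. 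The resolution is to use a path of length three in the $1$-skeleton: $x_0 - x_1 - x_2 - x_3$ with $[x_0,x_3]$ the long edge; but that edge is not forced by flagification of a path either. Hence the genuinely correct gadget is: make $[x_0, x_2]$ an honest edge of $\D_\varepsilon$ via a witness $y$ with $d(x_0, y) \le \varepsilon$, $d(x_2, y) \le \varepsilon$ placed off the line, and separately arrange that the \emph{triangle} $[x_0, x_1, x_2]$ lands in $\DR_\varepsilon$ but the pair $\{x_0, x_1\}$ is at distance close to $3\varepsilon$ — here $x_1$ sits between $x_0$'s witness and $x_2$, the standard three-hop chain $d(x_0, x_1) \le d(x_0, y_{02}) + d(y_{02}, x_2) + d(x_2, y_{12}) + d(y_{12}, x_1)$ being too long; I would instead just directly place $x_0, y_{01}, x_1$ so that $y_{01}$ is the \emph{only} $Y$-witness near both, with $d(x_0, x_1)$ close to $2\varepsilon$ from that edge alone, and then realize that to beat $c = 2$ one needs the third vertex, so the final gadget has three $x$'s forming a path-with-closing-edge where the closing edge of length $\approx 3\varepsilon$ is forced by flagifying the triangle whose other two edges each have length $\approx \varepsilon$ but are witnessed by points on \emph{opposite} sides.

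The main obstacle, and the step I would spend the most care on, is engineering the metric space and relation so that simultaneously (a) all the ``short'' edges are genuinely in $\D_\varepsilon(X,Y)$, (b) the flagification really does produce the long simplex (every subface of it is a short edge), and (c) no choice of a single $y \in Y$ is an $\varepsilon$-neighbor of all vertices of that long simplex — otherwise the long simplex would already be in $\D_\varepsilon$ and the example collapses. Balancing (b) and (c) is the delicate part: I would likely use a metric on a small weighted graph (a ``theta graph'' or a subdivided triangle) rather than $\R$, so that I have enough freedom to keep the three pairwise witnesses distinct while controlling all pairwise $x$-distances; alternatively, embed finitely many points in $\R^2$ with $\varepsilon$-balls chosen so their pairwise but not triple intersections meet $Y$. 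Once the configuration is fixed, verifying (a)--(c) is a routine finite check, and the conclusion $\DR_\varepsilon(X,Y) \not\subseteq \D_{c\varepsilon}(X,Y)$ for any $c < 3$ follows by noting the long edge realizes a pair at distance $3\varepsilon - \delta$ for $\delta$ as small as desired, so it lies in $\D_{c\varepsilon}$ only if $c\varepsilon \ge 3\varepsilon - \delta$, i.e. $c \ge 3 - \delta/\varepsilon$; letting $\delta \to 0$ rules out every $c < 3$.
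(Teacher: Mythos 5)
Your opening idea is exactly right, and it matches the paper's construction: take $X=\{x_0,x_1,x_2\}$ and $Y=\{y_0,y_1,y_2\}$ with each $y_k$ serving as the $\varepsilon$-witness of the pair $\{x_i,x_j\}$ ($\{i,j,k\}=\{0,1,2\}$), so that all three edges lie in $\D_\varepsilon(X,Y)$ and hence the $2$-simplex $[x_0,x_1,x_2]$ lies in $\DR_\varepsilon(X,Y)$, while no single $y\in Y$ is within $\varepsilon$ of all three $x_i$. The paper realizes this with a small graph equipped with the shortest-path metric, in which $d(y_k,x_i)=d(y_k,x_j)=1$ but $d(y_k,x_k)=3$.

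The rest of your argument, however, has a genuine gap, and it is the very confusion your detour kept circling: you conflate $x$-to-$x$ distances with $y$-to-$x$ distances, and you end up treating ``the long edge'' as the witness of non-containment. Both of these are wrong. If $[x_i,x_j]\in\D_\varepsilon(X,Y)$ then automatically $d(x_i,x_j)\le 2\varepsilon$ (go through the shared witness), so there is no edge of $\DR_\varepsilon(X,Y)$ whose endpoints are at distance close to $3\varepsilon$; in fact $\DR_\varepsilon$ has the same $1$-skeleton as $\D_\varepsilon$, so no \emph{edge} can exhibit $\DR_\varepsilon\not\subseteq\D_{c\varepsilon}$ for any $c\ge1$. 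Moreover, your closing deduction ``a pair at distance $3\varepsilon-\delta$ lies in $\D_{c\varepsilon}$ only if $c\varepsilon\ge 3\varepsilon-\delta$'' is not how membership in a Dowker complex works: an edge lies in $\D_{c\varepsilon}$ as soon as \emph{some} $y$ is within $c\varepsilon$ of both endpoints, which is compatible with $c\varepsilon$ being roughly half the separation. The simplex that must fail to lie in $\D_{c\varepsilon}$ is the $2$-simplex $[x_0,x_1,x_2]$, and the quantity you must control is $\max_i d(y,x_i)$ over $y\in Y$: you need every $y\in Y$ to be at distance at least $3\varepsilon$ from \emph{some} $x_i$, which is exactly what the graph metric in the paper arranges (and what repeatedly broke your attempts on the real line, where a witness for one edge tends to be too close to the third vertex). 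So: keep your triangle-with-three-distinct-witnesses setup, drop the line embedding and the ``long edge'' reasoning, and instead verify directly that in your configuration $\min_{y\in Y}\max_{0\le i\le 2} d(y,x_i)\ge 3\varepsilon$.
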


\begin{proof}
    Define \((Z,d)\) as the graph pictured in Figure~\ref{fig:counterexample_interleaving} equipped with the shortest-path metric, and let \(X=\left\{x_{0},x_{1},x_{2}\right\}\subseteq Z\) and \(Y=\left\{y_{0},y_{1},y_{2}\right\}\subseteq Z\) be the set of the crossed and hollow circles, respectively.
    It is easy to see that \([x_{i},x_{j}]\in\D_{1}(X,Y)\) for all \(0\leq i<j\leq 2\), and hence that \([x_{0},x_{1},x_{2}]\in\DR_{1}(X,Y)\).
    In contrast, for \(\D_{c}(X,Y)\), \(c\geq 1\), to contain \([x_{0},x_{1},x_{2}]\), \(c\) must be large enough to guarantee the existence of an element \(y\in Y\) such that \(d(y,x_{i})\leq c\) for all \(0\leq i\leq 2\).
    Since \(d(y_{i},x_{i})=3\) for all \(0\leq i\leq 2\), this is the case only if \(c\geq 3\).
\end{proof}

\begin{figure}[ht]
    \centering
    \def\svgwidth{0.45\textwidth}
    % \def\svgwidth{\linewidth}
    %% Creator: Inkscape 1.4.2 (f4327f4, 2025-05-13), www.inkscape.org
%% PDF/EPS/PS + LaTeX output extension by Johan Engelen, 2010
%% Accompanies image file 'interleaving_counterexample.pdf' (pdf, eps, ps)
%%
%% To include the image in your LaTeX document, write
%%   \input{<filename>.pdf_tex}
%%  instead of
%%   \includegraphics{<filename>.pdf}
%% To scale the image, write
%%   \def\svgwidth{<desired width>}
%%   \input{<filename>.pdf_tex}
%%  instead of
%%   \includegraphics[width=<desired width>]{<filename>.pdf}
%%
%% Images with a different path to the parent latex file can
%% be accessed with the `import' package (which may need to be
%% installed) using
%%   \usepackage{import}
%% in the preamble, and then including the image with
%%   \import{<path to file>}{<filename>.pdf_tex}
%% Alternatively, one can specify
%%   \graphicspath{{<path to file>/}}
%% 
%% For more information, please see info/svg-inkscape on CTAN:
%%   http://tug.ctan.org/tex-archive/info/svg-inkscape
%%
\begingroup%
  \makeatletter%
  \providecommand\color[2][]{%
    \errmessage{(Inkscape) Color is used for the text in Inkscape, but the package 'color.sty' is not loaded}%
    \renewcommand\color[2][]{}%
  }%
  \providecommand\transparent[1]{%
    \errmessage{(Inkscape) Transparency is used (non-zero) for the text in Inkscape, but the package 'transparent.sty' is not loaded}%
    \renewcommand\transparent[1]{}%
  }%
  \providecommand\rotatebox[2]{#2}%
  \newcommand*\fsize{\dimexpr\f@size pt\relax}%
  \newcommand*\lineheight[1]{\fontsize{\fsize}{#1\fsize}\selectfont}%
  \ifx\svgwidth\undefined%
    \setlength{\unitlength}{269.23722875bp}%
    \ifx\svgscale\undefined%
      \relax%
    \else%
      \setlength{\unitlength}{\unitlength * \real{\svgscale}}%
    \fi%
  \else%
    \setlength{\unitlength}{\svgwidth}%
  \fi%
  \global\let\svgwidth\undefined%
  \global\let\svgscale\undefined%
  \makeatother%
  \begin{picture}(1,0.72502441)%
    \lineheight{1}%
    \setlength\tabcolsep{0pt}%
    \put(0,0){\includegraphics[width=\unitlength,page=1]{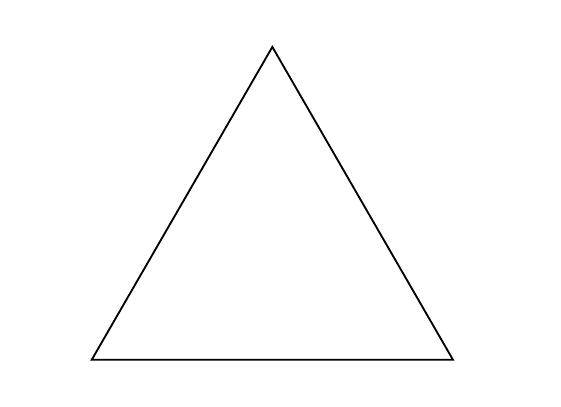}}%
    \put(0.47125807,0.67846971){\color[rgb]{0,0,0}\makebox(0,0)[lt]{\lineheight{0}\smash{\begin{tabular}[t]{l}\(x_{0}\)\end{tabular}}}}%
    \put(0.23701445,0.35371437){\color[rgb]{0,0,0}\makebox(0,0)[lt]{\lineheight{0}\smash{\begin{tabular}[t]{l}\(y_{2}\)\end{tabular}}}}%
    \put(0.68163151,0.35371437){\color[rgb]{0,0,0}\makebox(0,0)[lt]{\lineheight{0}\smash{\begin{tabular}[t]{l}\(y_{1}\)\end{tabular}}}}%
    \put(0.14925686,0.01501475){\color[rgb]{0,0,0}\makebox(0,0)[lt]{\lineheight{0}\smash{\begin{tabular}[t]{l}\(x_{1}\)\end{tabular}}}}%
    \put(0.47125117,0.01500519){\color[rgb]{0,0,0}\makebox(0,0)[lt]{\lineheight{0}\smash{\begin{tabular}[t]{l}\(y_{0}\)\end{tabular}}}}%
    \put(0.79325993,0.01501475){\color[rgb]{0,0,0}\makebox(0,0)[lt]{\lineheight{0}\smash{\begin{tabular}[t]{l}\(x_{2}\)\end{tabular}}}}%
    \put(0,0){\includegraphics[width=\unitlength,page=2]{interleaving_counterexample.pdf}}%
  \end{picture}%
\endgroup%

    \caption{The metric space \((Z,d)\) from the proof of Proposition~\ref{prop:counterexample_interleaving}, with subsets \(X\) and \(Y\) consisting of the crossed and hollow circles, respectively.}
    \label{fig:counterexample_interleaving}
\end{figure}

%%%%%%%%%%%%%%%%%%%%%%%%%%%%%%%%%%%%%%%%%%%%%%%%%%%%%%

\section{Dowker-Rips duality}\label{sec:duality}

In this section, we prove the strengthenings of the interleaving results from Section~\ref{sec:interleaving} and, in particular, the Dowker-Rips duality.
To do so, we restate and extend the definition of \(k\)-flagification to include a partial flagification that is needed in the proofs.

\begin{definition}\label{def:k_flagification_extended}
    Given a simplicial complex \(X\), the \emph{flagification of \(X\)}, denoted by \(\FF(X)\), is defined as the simplicial complex that is obtained from \(X\) by including a simplex \(\sigma\subseteq X\) whenever all edges of \(\sigma\) already belong to \(X\) and \(\dim(\sigma)\geq 2\).
    More generally, for an integer \(k\geq 2\), the \emph{\(k\)-flagification of \(X\)}, denoted by \(\FF^{\geq k}(X)\), is defined as the complex that is obtained from \(X\) by including a simplex \(\sigma\subseteq X\) whenever all \((k-1)\)-dimensional faces of \(\sigma\) already belong to \(X\) and \(\dim(\sigma)\geq k\).
    Finally, the \emph{partial \(k\)-flagification of \(X\)}, denoted by \(\FF^{k}(X)\), is defined as the complex that is obtained from \(X\) by including a simplex \(\sigma\subseteq X\) whenever all \((k-1)\)-dimensional faces of \(\sigma\) already belong to \(X\) and \(\dim(\sigma)=k\).
\end{definition}

Recall from~\cite[Section 5.1]{chowdhury_memoli_functorial_dowker_theorem} that there exists a simplicial map \(\Gamma\colon\D_{R}^{(1)}(X,Y)\to\D_{R}(Y,X)\) that induces a homotopy equivalence \(\psi\colon\vert\D_{R}^{(1)}(X,Y)\vert\to\vert\D_{R}(Y,X)\vert\) on the level of geometric realizations.
Here and in what follows, \(X^{(1)}\) denotes the first barycentric subdivision of a simplicial complex \(X\).
The map \(\Gamma\) is defined by mapping any vertex \(\sigma=[x_{0},\dots,x_{n}]\in\D_{R}^{(1)}(X,Y)\), \(x_{0},\dots,x_{n}\in X\), to an element \(y_{\sigma}\in Y\) such that \((x_{k},y_{\sigma})\in R\) for all \(k=0,\dots,n\).
It is shown in~\cite{chowdhury_memoli_functorial_dowker_theorem} that the map \(\Gamma\) thus defined is simplicial and, moreover, that different choices of \(y_{\sigma}\) in its definition result in maps that are contiguous to one another (and hence induce homotopic maps on the level of geometric realizations).
At a high level, we prove Theorem~\ref{thm:main_practical} by first showing in Lemma~\ref{lemma:main} that the map \(\psi\) can be extended to a map between the partial \(k\)-flagifications.
From this we deduce Proposition~\ref{prop:main}, the main technical result that establishes properties of the extensions of \(\psi\) pertaining to homology and commutativity.
Finally, Theorems~\ref{thm:main_technical} and~\ref{thm:main_practical} will be relatively straight forward consequences of that proposition.

To make sense of the setup of Lemma~\ref{lemma:main}, observe that \(\D_{R}(X,Y)\) is a subcomplex of \(\FF^{k}(\D_{R}(X,Y))\), which implies that \(\D_{R}^{(1)}(X,Y)\) is a subcomplex of \(\FF^{k}(\D_{R}(X,Y))^{(1)}\) for \(k\geq 2\).

\begin{lemma}\label{lemma:main}
    The homotopy equivalence \(\psi\colon\vert\D_{R}^{(1)}(X,Y)\vert\to\vert\D_{R}(Y,X)\vert\) extends to a continuous map \[
        \varphi\colon\vert\FF^{k}(\D_{R}(X,Y))^{(1)}\vert\to\vert\FF^{k}(\D_{R}(Y,X))\vert
    \] for any \(k\geq 2\).
\end{lemma}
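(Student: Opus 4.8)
The plan is to build $\varphi$ simplex by simplex, extending $\psi$ across the new simplices that were added in the partial $k$-flagification. Recall that $\psi$ is induced by the simplicial map $\Gamma\colon\D_R^{(1)}(X,Y)\to\D_R(Y,X)$, which sends a vertex $\sigma=[x_0,\dots,x_n]$ of $\D_R^{(1)}(X,Y)$ (i.e.\ a simplex of $\D_R(X,Y)$) to some $y_\sigma\in Y$ with $(x_k,y_\sigma)\in R$ for all $k$. The key point is that a simplex of $\FF^k(\D_R(X,Y))$ that is \emph{not} in $\D_R(X,Y)$ is a $k$-dimensional set $\tau=[x_0,\dots,x_k]\subseteq X$ all of whose $(k-1)$-faces lie in $\D_R(X,Y)$; I want to show that its image under (the extension of) $\Gamma$ lands in $\FF^k(\D_R(Y,X))$.

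First I would set up the combinatorics on the barycentric subdivision. The vertices of $\FF^k(\D_R(X,Y))^{(1)}$ are the simplices of $\FF^k(\D_R(X,Y))$; those coming from $\D_R(X,Y)$ already have $\Gamma$ defined on them, so I extend $\Gamma$ to the remaining vertices—the ``new'' top-dimensional simplices $\tau$—by choosing, for each such $\tau=[x_0,\dots,x_k]$, an element $y_\tau\in Y$ that witnesses one of its $(k-1)$-faces, say $(x_j,y_\tau)\in R$ for all $j\neq i$ for some fixed $i$. (Any such choice will do; contiguity-type arguments as in \cite{chowdhury_memoli_functorial_dowker_theorem} handle the ambiguity, but for mere existence of a continuous extension one fixed choice suffices.) Next I would check that the resulting vertex map is still simplicial: a simplex of $\FF^k(\D_R(X,Y))^{(1)}$ is a flag $\sigma_0\subsetneq\sigma_1\subsetneq\cdots\subsetneq\sigma_m$ of simplices of $\FF^k(\D_R(X,Y))$, and I must show that $\{y_{\sigma_0},\dots,y_{\sigma_m}\}$ spans a simplex of $\FF^k(\D_R(Y,X))$. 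For flags lying entirely in $\D_R(X,Y)$ this is exactly the original statement. If a new top simplex $\tau$ appears, it can only be the largest element $\sigma_m$ of the flag (by dimension), and $\sigma_{m-1}$ is one of its faces in $\D_R(X,Y)$; so I would argue that $\{y_{\sigma_0},\dots,y_{\sigma_{m-1}}\}$ already spans a simplex of $\D_R(Y,X)\subseteq\FF^k(\D_R(Y,X))$, and adjoining $y_\tau$ keeps all $(k-1)$-faces inside $\D_R(Y,X)$—here one uses that $y_\tau$ and the other $y_{\sigma_j}$ are each related to a common $x$ (an element of $\sigma_{m-1}$, which is contained in all larger $\sigma_j$), so every $(k-1)$-subset of $\{y_{\sigma_0},\dots,y_{\sigma_{m-1}},y_\tau\}$ is a simplex of $\D_R(Y,X)$, whence the whole set is a simplex of $\FF^k(\D_R(Y,X))$ by definition of partial $k$-flagification. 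Finally, $\varphi$ is the geometric realization of this simplicial map; it agrees with $\psi$ on $\vert\D_R^{(1)}(X,Y)\vert$ because it extends $\Gamma$, and it is continuous because it is simplicial.

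The main obstacle I anticipate is the simpliciality check in the case where the flag mixes old simplices with a new top simplex $\tau$: one has to be careful that the witnesses $y_{\sigma_j}$ chosen by the \emph{original} $\Gamma$ together with the \emph{ad hoc} witness $y_\tau$ still jointly form a simplex of the partial $k$-flagification, and in particular that no $(k-1)$-face of $\{y_{\sigma_0},\dots,y_{\sigma_m}\}$ is missing from $\D_R(Y,X)$. The resolution is the observation above that all these $y$'s share a common related element $x\in\sigma_{m-1}$ (indeed, since $\Gamma$ is simplicial on $\D_R^{(1)}(X,Y)$, one knows more: the $y_{\sigma_j}$ for $j<m$ already span a simplex), so that every $k$-element subset of them has all of its $(k-1)$-faces in $\D_R(Y,X)$, which is precisely the hypothesis needed to conclude membership in $\FF^k(\D_R(Y,X))$. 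A secondary, purely bookkeeping point is to make sure $\tau$ can indeed only occur as the maximal element of any flag it participates in, which follows since all of its proper faces lie in $\D_R(X,Y)$ while $\tau$ itself does not.
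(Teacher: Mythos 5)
Your proposal breaks down at the simpliciality check, and the failure is already visible for \(k=2\). Let \(\tau=[x_0,x_1,x_2]\in\FF^2(\D_R(X,Y))\setminus\D_R(X,Y)\) be a new triangle, and suppose you pick \(y_\tau\) witnessing the edge \([x_0,x_1]\), so \((x_0,y_\tau),(x_1,y_\tau)\in R\). Since \(\tau\notin\D_R(X,Y)\), no element of \(Y\) is related to all three \(x_i\), so \((x_2,y_\tau)\notin R\). The flag \(\{x_2\}\subsetneq\tau\) is an edge of \(\FF^2(\D_R(X,Y))^{(1)}\), so your vertex map must carry it to the edge \([y_{\{x_2\}},y_\tau]\). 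But the partial \(2\)-flagification only inserts \(2\)-dimensional simplices, so this edge would have to lie already in \(\D_R(Y,X)\), which demands a single \(x\in X\) with \((x,y_{\{x_2\}})\in R\) and \((x,y_\tau)\in R\). Nothing guarantees this: if the fixed map \(\Gamma\) happens to send \(\{x_2\}\) to some \(y_{\{x_2\}}\) related \emph{only} to \(x_2\), then the common witness would have to be \(x_2\), contradicting \((x_2,y_\tau)\notin R\). Running the same argument for \(\{x_0\}\) and \(\{x_1\}\) shows that no choice of \(y_\tau\) can save the day, since a valid \(y_\tau\) would have to be related to all three \(x_i\), which is impossible. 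The sentence claiming that ``\(y_\tau\) and the other \(y_{\sigma_j}\) are each related to a common \(x\) (an element of \(\sigma_{m-1}\))'' is where the error sits: for the \(y_{\sigma_j}\), \(j<m\), a common witness must lie in the \emph{smallest} member \(\sigma_0\) of the flag, not in \(\sigma_{m-1}\), and \(\sigma_0\) may well be the lone vertex that \(y_\tau\) fails to see. A secondary slip is the appeal to ``membership in \(\FF^k(\D_R(Y,X))\) by definition of partial \(k\)-flagification'' for the whole chain \(\{y_{\sigma_0},\dots,y_{\sigma_m}\}\): the partial \(k\)-flagification only creates simplices of dimension exactly \(k\), whereas your chains have all dimensions from \(0\) through \(k\), so the lower-dimensional ones would need to be in \(\D_R(Y,X)\) outright, which is exactly what fails.

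The upshot is that there is, in general, \emph{no simplicial} extension of \(\Gamma\) over the subdivided new simplices, so the map you propose does not exist. The paper sidesteps this by not demanding simpliciality. For each new \(k\)-simplex \(\sigma\) it records the witnesses \(y_I=\Gamma(x_I)\) for all proper faces \(I\), shows these span a subcomplex \(C_\sigma^Y\subseteq\FF^k(\D_R(Y,X))\) (into which \(\psi\) already maps the subdivided boundary), verifies that \(|C_\sigma^Y|\) is contractible by a sequence of elementary collapses, and then invokes the topological extension principle (Hatcher, Cor.\ 4.73) to extend \(\psi\) continuously across the cell \(|C_\sigma^X|\). That extension is a genuinely continuous, not simplicial, map --- and this extra freedom is exactly what your approach is missing.
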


\begin{proof}
    To prove the lemma, we must define \(\varphi\) on the portion of \(\vert\FF^{k}(\D_{R}(X,Y))^{(1)}\vert\) that is not present in \(\vert\D_{R}^{(1)}(X,Y)\vert\).
    This portion consists of the geometric realizations of those simplices that belong to \(\FF^{k}(\D_{R}(X,Y))\), but not to \(\D_{R}(X,Y)\).
    Let \(\sigma\in\FF^{k}(\D_{R}(X,Y))\setminus\D_{R}(X,Y)\) be such a simplex.
    Since \(\sigma\) is \(k\)-dimensional, we may write \(\sigma=[x_{0},\dots,x_{k}]\) for some \(x_{0},\dots,x_{k}\in X\).
    Moreover, by definition of \(\FF^{k}(\D_{R}(X,Y))\), it must be the case that all proper faces of \(\sigma\) belong to \(\D_{R}(X,Y)\).
    Letting \(\II_{k}\) denote the set of subsets \(I\subseteq\left\{0,\dots,k\right\}\) such that \(0<\vert I\vert<k+1\), we thus have that \([x_{i}]_{i\in I}\in\D_{R}(X,Y)\) for all \(I\in\II_{k}\).
    Given \(I\in\II_{k}\), let \(x_{I}\in\D_{R}^{(1)}(X,Y)\) denote the vertex corresponding to the face \([x_{i}]_{i\in I}\) of \(\sigma\), and define the subcomplex \(C_{\partial\sigma}^{X}\subseteq\D_{R}^{(1)}(X,Y)\) as the barycentric subdivision of the complex consisting of the proper faces of \(\sigma\).
    Similarly, define \(C_{\sigma}^{X}\subseteq\FF^{k}(\D_{R}(X,Y))^{(1)}\) as the barycentric subdivision of \(\sigma\).
    See Figure~\ref{fig:lemma_main_1} for a schematic illustration of \(C_{\partial\sigma}^{X}\) and \(C_{\sigma}^{X}\) in the case where \(k=2\).

    Given any \(I\in\II_{k}\), set \(y_{I}\coloneqq\Gamma(x_{I})\in\D_{R}(Y,X)\).
    Note that a collection of these elements spans a simplex \([y_{I_{1}},\dots,y_{I_{l}}]\in\D_{R}(Y,X)\) whenever \(I_{1},\dots,I_{l}\in\II_{k}\) are such that \(I_{1}\cap\cdots\cap I_{l}\neq\varnothing\).
    To see this, let \(I_{1},\dots,I_{l}\in\II_{k}\) be such sets.
    Then, by definition of \(\Gamma\), we have that \((x_{i},y_{I_{1}}),\dots,(x_{i},y_{I_{l}})\in R\) for all \(i\in I_{1}\cap\cdots\cap I_{l}\), and hence that \([y_{I_{1}},\dots,y_{I_{l}}]\in\D_{R}(Y,X)\).\footnote{
        Note that the elements \(y_{I}\in Y\) for \(I\in\II_{k}\) are not necessarily pairwise distinct: if \(I,J\in\II_{k}\) are such that \(I\subseteq J\), it can be the case that \(y_{I}=y_{J}\in Y\), in which case the edge \([y_{I},y_{J}]\) degenerates to a point.
    }
    In particular, we have that \(\D_{R}(Y,X)\) contains the \(k+1\) simplices \([y_{I}]_{\left\{I\in\II_{k}\mid i\in I,\vert I\vert=k\right\}}\), each of dimension \(k-1\), for all \(i=0,\dots,k\).
    Hence \(\FF^{k}(\D_{R}(Y,X))\) contains the \(k\)-dimensional simplex \([y_{I}]_{\left\{I\in\II_{k}\mid\vert I\vert=k\right\}}\).
    With this at hand, define the subcomplex \(C_{\partial\sigma}^{Y}\subseteq\D_{R}(Y,X)\) as having vertex set \(\left\{y_{I}\mid I\in\II_{k}\right\}\) and simplices \([y_{I_{1}},\dots,y_{I_{l}}]\), for \(I_{1},\dots,I_{l}\in\II_{k}\) such that \(I_{1}\cap\cdots\cap I_{l}\neq\varnothing\).
    Furthermore, define \(C_{\sigma}^{Y}\subseteq\FF^{k}(\D_{R}(Y,X))\) to be the complex obtained from \(C_{\partial\sigma}^{Y}\) by adding the simplex \([y_{I}]_{\left\{I\in\II_{k}\mid\vert I\vert=k\right\}}\).
    See Figure~\ref{fig:lemma_main_2} for a schematic illustration of \(C_{\partial\sigma}^{Y}\) and \(C_{\sigma}^{Y}\) in the case where \(k=2\).

    By construction, we have that \(\Gamma(C_{\partial\sigma}^{X})\subseteq C_{\partial\sigma}^{Y}\), and hence, by passing to geometric realizations, that \(\psi(\vert C_{\partial\sigma}^{X}\vert)\subseteq\vert C_{\partial\sigma}^{Y}\vert\subseteq\vert C_{\sigma}^{Y}\vert\).
    It remains to show that \(\psi\) extends from \(\vert C_{\partial\sigma}^{X}\vert\) to \(\vert C_{\sigma}^{X}\vert\), for which, in turn, it suffices to show that \(\vert C_{\sigma}^{Y}\vert\) is contractible (see, for instance,~\cite[Corollary 4.73]{hatcher_algebraic_topology}).
    To that end, observe that for any \(i=0,\dots,k\), the simplex \([y_{I}]_{\left\{I\in\II_{k}\mid i\in I\right\}}\in C_{\sigma}^{Y}\), that is, the simplex induced by all \(y_{I}\) whose subscript contains \(i\), is a maximal face of \(C_{\sigma}^{Y}\).
    Indeed, \([y_{I}]_{\left\{I\in\II_{k}\mid i\in I\right\}}\) is the only maximal face containing the vertex \(y_{i}\), and hence the latter vertex is a free face of \(C_{\sigma}^{Y}\).
    We may thus collapse \(C_{\sigma}^{Y}\) with respect to the free faces \(y_{0},\dots,y_{k}\), which results in a complex homotopy equivalent to \(C_{\sigma}^{Y}\).
    This resulting complex is the subcomplex of \(C_{\sigma}^{Y}\) induced by the vertices \(y_{I}\) for \(I\in\II_{k}\) and \(\vert I\vert>1\).
    Similarly to before, all vertices of this new complex that are of the form \(y_{I}\) for \(I\in\II_{k}\) and \(\vert I\vert=2\) are free faces.
    We may thus collapse this complex with respect to these free faces to obtain a complex that is still homotopy equivalent to \(C_{\sigma}^{Y}\).
    Repeating this process eventually results in the subcomplex of \(C_{\sigma}^{Y}\) induced by the vertices \(y_{I}\) for \(I\in\II_{k}\) and \(\vert I\vert=k\), and demonstrates that this resulting complex is homotopy equivalent to the original complex \(C_{\sigma}^{Y}\).
    As we have seen in the previous paragraph, we have that \([y_{I}]_{\left\{I\in\II_{k}\mid\vert I\vert=k\right\}}\in\FF^{k}(\D_{R}(Y,X))\).
    In other words, the complex resulting from iteratively collapsing as above is simply a \(k\)-dimensional simplex and hence \(C_{\sigma}^{Y}\), being homotopy equivalent to a simplex, is contractible.
\end{proof}

\begin{figure}[ht]
    \centering
    \begin{subfigure}[t]{0.45\textwidth}
        \centering
        \def\svgwidth{\linewidth}
        %% Creator: Inkscape 1.4.2 (f4327f4, 2025-05-13), www.inkscape.org
%% PDF/EPS/PS + LaTeX output extension by Johan Engelen, 2010
%% Accompanies image file 'lemma_main_1.pdf' (pdf, eps, ps)
%%
%% To include the image in your LaTeX document, write
%%   \input{<filename>.pdf_tex}
%%  instead of
%%   \includegraphics{<filename>.pdf}
%% To scale the image, write
%%   \def\svgwidth{<desired width>}
%%   \input{<filename>.pdf_tex}
%%  instead of
%%   \includegraphics[width=<desired width>]{<filename>.pdf}
%%
%% Images with a different path to the parent latex file can
%% be accessed with the `import' package (which may need to be
%% installed) using
%%   \usepackage{import}
%% in the preamble, and then including the image with
%%   \import{<path to file>}{<filename>.pdf_tex}
%% Alternatively, one can specify
%%   \graphicspath{{<path to file>/}}
%% 
%% For more information, please see info/svg-inkscape on CTAN:
%%   http://tug.ctan.org/tex-archive/info/svg-inkscape
%%
\begingroup%
  \makeatletter%
  \providecommand\color[2][]{%
    \errmessage{(Inkscape) Color is used for the text in Inkscape, but the package 'color.sty' is not loaded}%
    \renewcommand\color[2][]{}%
  }%
  \providecommand\transparent[1]{%
    \errmessage{(Inkscape) Transparency is used (non-zero) for the text in Inkscape, but the package 'transparent.sty' is not loaded}%
    \renewcommand\transparent[1]{}%
  }%
  \providecommand\rotatebox[2]{#2}%
  \newcommand*\fsize{\dimexpr\f@size pt\relax}%
  \newcommand*\lineheight[1]{\fontsize{\fsize}{#1\fsize}\selectfont}%
  \ifx\svgwidth\undefined%
    \setlength{\unitlength}{267.25428303bp}%
    \ifx\svgscale\undefined%
      \relax%
    \else%
      \setlength{\unitlength}{\unitlength * \real{\svgscale}}%
    \fi%
  \else%
    \setlength{\unitlength}{\svgwidth}%
  \fi%
  \global\let\svgwidth\undefined%
  \global\let\svgscale\undefined%
  \makeatother%
  \begin{picture}(1,0.75822414)%
    \lineheight{1}%
    \setlength\tabcolsep{0pt}%
    \put(0,0){\includegraphics[width=\unitlength,page=1]{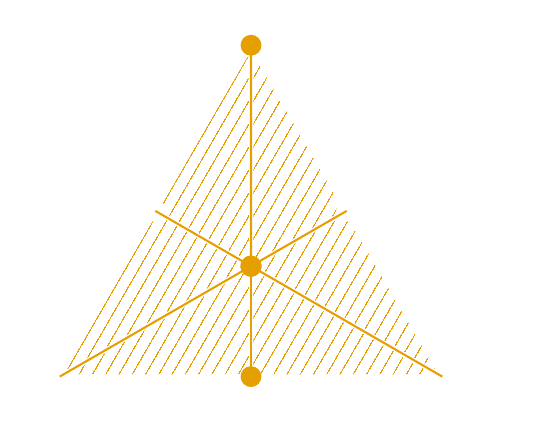}}%
    \put(0.41752174,0.7163449){\color[rgb]{0,0,0}\makebox(0,0)[lt]{\lineheight{0}\smash{\begin{tabular}[t]{l}\(x_{0}\)\end{tabular}}}}%
    \put(0.79786467,0.01476603){\color[rgb]{0,0,0}\makebox(0,0)[lt]{\lineheight{0}\smash{\begin{tabular}[t]{l}\(x_{2}\)\end{tabular}}}}%
    \put(0.05986801,0.01476603){\color[rgb]{0,0,0}\makebox(0,0)[lt]{\lineheight{0}\smash{\begin{tabular}[t]{l}\(x_{1}\)\end{tabular}}}}%
    \put(0.44594269,0.01476603){\color[rgb]{0,0,0}\makebox(0,0)[lt]{\lineheight{0}\smash{\begin{tabular}[t]{l}\(x_{12}\)\end{tabular}}}}%
    \put(0.16142285,0.40765066){\color[rgb]{0,0,0}\makebox(0,0)[lt]{\lineheight{0}\smash{\begin{tabular}[t]{l}\(x_{01}\)\end{tabular}}}}%
    \put(0.65814294,0.40765066){\color[rgb]{0,0,0}\makebox(0,0)[lt]{\lineheight{0}\smash{\begin{tabular}[t]{l}\(x_{02}\)\end{tabular}}}}%
    \put(0,0){\includegraphics[width=\unitlength,page=2]{lemma_main_1.pdf}}%
  \end{picture}%
\endgroup%

        \caption{The complexes \(C_{\partial\sigma}^{X}\) (black) and \(C_{\sigma}^{X}\) (black and orange).}
        \label{fig:lemma_main_1}
    \end{subfigure}
    \hfill
    \begin{subfigure}[t]{0.45\textwidth}
        \centering
        \def\svgwidth{\linewidth}
        %% Creator: Inkscape 1.4.2 (f4327f4, 2025-05-13), www.inkscape.org
%% PDF/EPS/PS + LaTeX output extension by Johan Engelen, 2010
%% Accompanies image file 'lemma_main_2.pdf' (pdf, eps, ps)
%%
%% To include the image in your LaTeX document, write
%%   \input{<filename>.pdf_tex}
%%  instead of
%%   \includegraphics{<filename>.pdf}
%% To scale the image, write
%%   \def\svgwidth{<desired width>}
%%   \input{<filename>.pdf_tex}
%%  instead of
%%   \includegraphics[width=<desired width>]{<filename>.pdf}
%%
%% Images with a different path to the parent latex file can
%% be accessed with the `import' package (which may need to be
%% installed) using
%%   \usepackage{import}
%% in the preamble, and then including the image with
%%   \import{<path to file>}{<filename>.pdf_tex}
%% Alternatively, one can specify
%%   \graphicspath{{<path to file>/}}
%% 
%% For more information, please see info/svg-inkscape on CTAN:
%%   http://tug.ctan.org/tex-archive/info/svg-inkscape
%%
\begingroup%
  \makeatletter%
  \providecommand\color[2][]{%
    \errmessage{(Inkscape) Color is used for the text in Inkscape, but the package 'color.sty' is not loaded}%
    \renewcommand\color[2][]{}%
  }%
  \providecommand\transparent[1]{%
    \errmessage{(Inkscape) Transparency is used (non-zero) for the text in Inkscape, but the package 'transparent.sty' is not loaded}%
    \renewcommand\transparent[1]{}%
  }%
  \providecommand\rotatebox[2]{#2}%
  \newcommand*\fsize{\dimexpr\f@size pt\relax}%
  \newcommand*\lineheight[1]{\fontsize{\fsize}{#1\fsize}\selectfont}%
  \ifx\svgwidth\undefined%
    \setlength{\unitlength}{283.80775992bp}%
    \ifx\svgscale\undefined%
      \relax%
    \else%
      \setlength{\unitlength}{\unitlength * \real{\svgscale}}%
    \fi%
  \else%
    \setlength{\unitlength}{\svgwidth}%
  \fi%
  \global\let\svgwidth\undefined%
  \global\let\svgscale\undefined%
  \makeatother%
  \begin{picture}(1,0.91077933)%
    \lineheight{1}%
    \setlength\tabcolsep{0pt}%
    \put(0,0){\includegraphics[width=\unitlength,page=1]{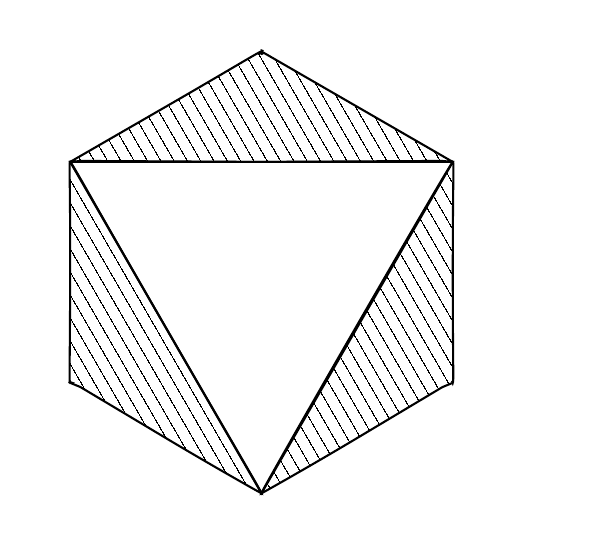}}%
    \put(0.41929793,0.87310451){\color[rgb]{0,0,0}\makebox(0,0)[lt]{\lineheight{0}\smash{\begin{tabular}[t]{l}\(y_{0}\)\end{tabular}}}}%
    \put(0.41929793,0.0121431){\color[rgb]{0,0,0}\makebox(0,0)[lt]{\lineheight{0}\smash{\begin{tabular}[t]{l}\(y_{12}\)\end{tabular}}}}%
    \put(0.02290281,0.24772961){\color[rgb]{0,0,0}\makebox(0,0)[lt]{\lineheight{0}\smash{\begin{tabular}[t]{l}\(y_{1}\)\end{tabular}}}}%
    \put(0.00176176,0.67055104){\color[rgb]{0,0,0}\makebox(0,0)[lt]{\lineheight{0}\smash{\begin{tabular}[t]{l}\(y_{01}\)\end{tabular}}}}%
    \put(0.79983723,0.24772961){\color[rgb]{0,0,0}\makebox(0,0)[lt]{\lineheight{0}\smash{\begin{tabular}[t]{l}\(y_{2}\)\end{tabular}}}}%
    \put(0.79983723,0.61769836){\color[rgb]{0,0,0}\makebox(0,0)[lt]{\lineheight{0}\smash{\begin{tabular}[t]{l}\(y_{02}\)\end{tabular}}}}%
    \put(0,0){\includegraphics[width=\unitlength,page=2]{lemma_main_2.pdf}}%
  \end{picture}%
\endgroup%

        \caption{The complexes \(C_{\partial\sigma}^{Y}\) (black) and \(C_{\sigma}^{Y}\) (black and orange).}
        \label{fig:lemma_main_2}
    \end{subfigure}
    \caption{Schematics accompanying the proof of Lemma~\ref{lemma:main} for the case where \(k=2\).}
    \label{fig:lemma_main}
\end{figure}

With the previous lemma at hand, we can now deduce the required properties of the extensions of the map \(\psi\).

\begin{proposition}\label{prop:main}
    Let \(X\) and \(Y\) be two finite sets, let \(R\subseteq R'\subseteq X\times Y\) be two non-empty relations, and let \(k\geq 2\) an integer.
    Then there exist continuous maps \(\varphi\colon\vert\FF^{k}(\D_{R}(X,Y))\vert\to\vert\FF^{k}(\D_{R}(Y,X))\vert\) and \(\varphi'\colon\vert\FF^{k}(\D_{R'}(X,Y))\vert\to\vert\FF^{k}(\D_{R'}(Y,X))\vert\) that induce isomorphisms on the level of \(i\)-dimensional homology for \(i=0,\dots,k-1\), and, moreover, such that the diagram
    \begin{equation}\label{eq:commutativity_of_phi}
        \begin{tikzcd}
            \vert\FF^{k}(\D_{R}(X,Y))\vert \arrow[r, hook] \arrow[d, swap, "\varphi"] & \vert\FF^{k}(\D_{R'}(X,Y))\vert \arrow[d, "\varphi'"] \\
            \vert\FF^{k}(\D_{R}(Y,X))\vert \arrow[r, hook] & \vert\FF^{k}(\D_{R'}(Y,X))\vert
        \end{tikzcd}
    \end{equation}
    commutes up to homotopy.
    Here, the horizontal maps are given by inclusion.
\end{proposition}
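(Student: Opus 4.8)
\section*{Proof proposal for Proposition~\ref{prop:main}}

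The plan is to take for \(\varphi\) and \(\varphi'\) the maps produced by Lemma~\ref{lemma:main}, applied to \(R\) and to \(R'\) respectively, composed with the canonical homeomorphisms \(\vert\FF^{k}(\D_{R}(X,Y))^{(1)}\vert\cong\vert\FF^{k}(\D_{R}(X,Y))\vert\). There are then two things to establish. First, that each of \(\varphi,\varphi'\) induces isomorphisms on \(\HH_{i}\) for \(i=0,\dots,k-1\); for this we will leverage that \(\varphi\) restricts to the homotopy equivalence \(\psi=\psi_{R}\) of~\cite{chowdhury_memoli_functorial_dowker_theorem} on \(\vert\D_{R}^{(1)}(X,Y)\vert\) by construction. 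Second, that the square~\eqref{eq:commutativity_of_phi} commutes up to homotopy; for this we will build \(\varphi\) and \(\varphi'\) from \emph{compatible} choices in Lemma~\ref{lemma:main}. Write \(j^{X}\colon\HH_{*}(\D_{R}(X,Y))\to\HH_{*}(\FF^{k}(\D_{R}(X,Y)))\) and \(j^{Y}\colon\HH_{*}(\D_{R}(Y,X))\to\HH_{*}(\FF^{k}(\D_{R}(Y,X)))\) for the maps induced by inclusion (and identify these with their barycentrically subdivided versions throughout).

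The key structural observation for the homology statement is that, by Definition~\ref{def:k_flagification_extended}, \(\FF^{k}(\D_{R}(X,Y))\) is obtained from \(\D_{R}(X,Y)\) by attaching one \(k\)-cell for each \(k\)-simplex \(\sigma\in\FF^{k}(\D_{R}(X,Y))\setminus\D_{R}(X,Y)\), along its boundary, which already lies in \(\D_{R}(X,Y)\). Hence the relative chain complex of the pair is concentrated in degree \(k\) and free on these new simplices, so the long exact sequence of the pair shows that \(j^{X}\) is an isomorphism for \(i\le k-2\) and surjective for \(i=k-1\), with \(\ker j^{X}\subseteq\HH_{k-1}(\D_{R}(X,Y))\) the subgroup generated by the boundary classes \([\partial\sigma]\) of the new \(k\)-simplices; the same holds for \(j^{Y}\). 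Since \(\varphi\) restricts to \(\psi\) on \(\vert\D_{R}^{(1)}(X,Y)\vert\), applying \(\HH_{i}(-)\) produces a commuting square whose top and bottom maps are \(j^{X}\) and \(j^{Y}\), whose left map is the isomorphism \(\psi_{*}\), and whose right map is \(\varphi_{*}\). For \(i\le k-2\) the maps \(j^{X},j^{Y},\psi_{*}\) are all isomorphisms, hence so is \(\varphi_{*}\). For \(i=k-1\), surjectivity of \(j^{X},j^{Y}\) together with commutativity reduces the claim to the single identity \(\psi_{*}(\ker j^{X})=\ker j^{Y}\): once this holds, \(\varphi_{*}\) is exactly the isomorphism that \(\psi_{*}\) induces on the quotients \(\HH_{k-1}(\D_{R}(X,Y))/\ker j^{X}\cong\HH_{k-1}(\FF^{k}(\D_{R}(X,Y)))\) and correspondingly on the \(Y\) side.

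To prove \(\psi_{*}(\ker j^{X})=\ker j^{Y}\), recall from the proof of Lemma~\ref{lemma:main} that for each new \(k\)-simplex \(\sigma\) the map \(\psi\) sends \(\vert C_{\partial\sigma}^{X}\vert\) into \(\vert C_{\partial\sigma}^{Y}\vert\subseteq\vert C_{\sigma}^{Y}\vert\), and \(C_{\sigma}^{Y}\) is contractible. Since \(k-1\ge 1\) we get \(\HH_{k-1}(\vert C_{\sigma}^{Y}\vert)=0\), so \(j^{Y}\) kills \(\psi_{*}[\partial\sigma]\); as these classes generate \(\ker j^{X}\), this gives \(\psi_{*}(\ker j^{X})\subseteq\ker j^{Y}\). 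For the reverse inclusion, apply Lemma~\ref{lemma:main} with the roles of \(X\) and \(Y\) exchanged to get a map \(\tilde\varphi\colon\vert\FF^{k}(\D_{R}(Y,X))^{(1)}\vert\to\vert\FF^{k}(\D_{R}(X,Y))\vert\) extending the reverse Dowker map \(\psi'\colon\vert\D_{R}^{(1)}(Y,X)\vert\to\vert\D_{R}(X,Y)\vert\); the same contractibility argument gives \(\psi'_{*}(\ker j^{Y})\subseteq\ker j^{X}\). Since \(\psi'\) is a homotopy inverse of \(\psi\) by~\cite{chowdhury_memoli_functorial_dowker_theorem}, we have \(\psi'_{*}=\psi_{*}^{-1}\), so this rearranges to \(\ker j^{Y}\subseteq\psi_{*}(\ker j^{X})\), giving equality. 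This completes the proof that \(\varphi_{*}\) is an isomorphism for \(i=0,\dots,k-1\); the identical argument handles \(\varphi'\).

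Finally, for the homotopy-commutativity of~\eqref{eq:commutativity_of_phi}, the plan is to exploit the freedom in Lemma~\ref{lemma:main}. First choose the Chowdhury--M\'emoli maps \(\Gamma_{R}\) and \(\Gamma_{R'}\) so that \(\Gamma_{R'}\) restricts to \(\Gamma_{R}\) on the common vertex set; then \(\psi_{R'}\) restricts to the inclusion composed with \(\psi_{R}\) on \(\vert\D_{R}^{(1)}(X,Y)\vert\), so the square commutes strictly there. It remains to extend over the new \(k\)-cells \(\vert C_{\sigma}^{X}\vert\) of \(\FF^{k}(\D_{R}(X,Y))\). If \(\sigma\notin\D_{R'}(X,Y)\), then \(\sigma\) is flag-added at level \(R'\) as well, the contractible complexes \(C_{\sigma}^{Y}\) built in Lemma~\ref{lemma:main} for \(R\) and for \(R'\) coincide (same vertices, same face rule) and both sit inside \(\FF^{k}(\D_{R'}(Y,X))\), so one defines \(\varphi'\) to agree with \(\varphi\) on \(\vert C_{\sigma}^{X}\vert\) and the square commutes strictly there. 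If instead \(\sigma\in\D_{R'}(X,Y)\), then \(\varphi'\) is forced to equal \(\psi_{R'}\) on \(\vert C_{\sigma}^{X}\vert\); here the two composites agree on the boundary sphere \(\vert C_{\partial\sigma}^{X}\vert\), one of them has image in the contractible \(\vert C_{\sigma}^{Y}\vert\) and the other in the contractible simplicial cone \(\Gamma_{R'}(\vert C_{\sigma}^{X}\vert)\), both inside \(\vert\FF^{k}(\D_{R'}(Y,X))\vert\), and one must show the two fillings are homotopic relative to that sphere; assembling these cell-wise homotopies then yields the desired homotopy of the square. I expect this last step --- the case where a simplex that is flag-added at level \(R\) becomes a genuine Dowker simplex at level \(R'\) --- to be the main obstacle, since a priori there is a \(\pi_{k}\)-valued obstruction to the rel-boundary homotopy; I would attack it by a careful analysis of the union of the two contractible target subcomplexes inside \(\vert\FF^{k}(\D_{R'}(Y,X))\vert\), and note as a fallback that for the applications (Theorems~\ref{thm:main_technical} and~\ref{thm:main_practical}) only commutativity on \(\HH_{i}\) for \(i\le k-1\) is needed, which is immediate from the commuting homology squares above.
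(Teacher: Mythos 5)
Your proposal follows the same high-level strategy as the paper: take \(\varphi,\varphi'\) to be the extensions from Lemma~\ref{lemma:main}, deduce the homology isomorphism from the fact that the extension restricts to \(\psi\) on the Dowker subcomplex, and obtain the commutativity of~\eqref{eq:commutativity_of_phi} by making compatible choices of \(\Gamma_R\subseteq\Gamma_{R'}\). The differences lie in the details of both halves, and they cut in opposite directions.

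For the homology isomorphism, your argument via \(\psi_*(\ker j^X)=\ker j^Y\) is more elaborate than necessary and its reverse inclusion leans on \(\psi'_*=\psi_*^{-1}\), a claim you should either cite explicitly or avoid. The paper sidesteps this entirely: from \(\iota^Y\circ\psi\) being surjective on \(\HH_i\) for \(i\le k-1\) and the commuting square one gets that \(\varphi_*\) is surjective; swapping \(X\) and \(Y\) gives a surjection the other way; and two surjections between finitely generated abelian groups whose composite is a surjective endomorphism of a Noetherian module force both to be isomorphisms. This "surjective both ways" argument is cleaner and self-contained; you should adopt it in place of the kernel analysis.

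For the commutativity of~\eqref{eq:commutativity_of_phi}, you have actually identified a real delicacy that the paper's own proof glosses over. The paper asserts that \(\varphi'\) can be chosen to agree strictly with \(\varphi\) on each new \(k\)-cell \(|\sigma|\) of \(\FF^{k}(\D_{R}(X,Y))\). This is fine when \(\sigma\notin\D_{R'}(X,Y)\), as you observe, but when \(\sigma\in\D_{R'}(X,Y)\) the map \(\varphi'|_{|\sigma|}\) is forced to be \(\psi_{R'}|_{|\sigma|}\), whose image contains \(y_\sigma=\Gamma_{R'}(\sigma)\); this point witnesses \(\sigma\) at level \(R'\) but need not lie in \(\FF^k(\D_R(Y,X))\) at all, so it cannot coincide with \(\varphi|_{|\sigma|}\). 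You correctly flag the resulting rel-boundary \(\pi_k\) obstruction and do not pretend to resolve it. Crucially, your fallback is the right one and is sufficient for what the paper actually uses: Theorems~\ref{thm:main_technical} and~\ref{thm:main_practical} only require the square to commute after applying \(\HH_i\) for \(i\le k-1\), and this does hold by tracing any class back through the surjection \(j^X_R\), using functoriality of the \(\psi\)'s and the fact that all inclusion squares commute. If you state Proposition~\ref{prop:main} with homology-level commutativity instead of homotopy commutativity, your proof is complete (modulo tidying the isomorphism step as above); this is, in fact, a cleaner formulation than what the paper claims.
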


\begin{proof}
    Let \(\varphi\colon\vert\FF^{k}(\D_{R}(X,Y))^{(1)}\vert\to\vert\FF^{k}(\D_{R}(Y,X))\vert\) be an extension of the homotopy equivalence \(\psi\colon\vert\D_{R}^{(1)}(X,Y)\vert\to\vert\D_{R}(Y,X)\vert\), whose existence is guaranteed by Lemma~\ref{lemma:main}.

    We first show that \(\varphi\) induces isomorphisms on the level of \(i\)-dimensional homology for \(i=0,\dots,k-1\).
    To that end, consider the commutative diagram \[
        \begin{tikzcd}
            \vert\D_{R}(X,Y)\vert \arrow[r, hook, "\iota^{X}"] \arrow[d, "\psi", swap] & \vert\FF^{k}(\D_{R}(X,Y))\vert \arrow[d, "\varphi"] \\
            \vert\D_{R}(Y,X)\vert \arrow[r, hook, "\iota^{Y}"] & \vert\FF^{k}(\D_{R}(Y,X))\vert
        \end{tikzcd}
    \] where \(\iota^{X}\) and \(\iota^{Y}\) denote inclusion maps, and where we identified \(\vert\D_{R}^{(1)}(X,Y)\) and \(\vert\D_{R}(X,Y)\vert\) via the canonical homeomorphism between them.
    Now, since \(\vert\FF^{k}(\D_{R}(Y,X))\vert\) is obtained from \(\vert\D_{R}(Y,X)\vert\) by attaching \(k\)-dimensional cells, it follows that \(\iota^{Y}\) induces an isomorphism on the level of \(i\)-dimensional homology for \(i=0,\dots,k-2\), and a surjection on the level of \((k-1)\)-dimensional homology.
    Hence, using the fact that \(\psi\) is a homotopy equivalence, we have that the map \(\iota^{Y}\circ\psi\) induces a surjection on the level of \(i\)-dimensional homology for \(i=0,\dots,k-1\).
    By commutativity of the above diagram, the same is true about the map \(\varphi\circ\iota^{X}\), and hence the map that \(\varphi\) alone induces on the level of \(i\)-dimensional homology must be a surjection, too, for \(i=0,\dots,k-1\).
    Swapping the roles of \(X\) and \(Y\) in the above, it follows that \(\HH_{i}(\vert\FF^{k}(\D_{R}(X,Y))\vert)\) surjects onto \(\HH_{i}(\vert\FF^{k}(\D_{R}(Y,X))\vert)\) and vice versa for \(i=0,\dots,k-1\).
    Since all simplicial complexes involved are finite, we thus have that \(\HH_{i}(\vert\FF^{k}(\D_{R}(X,Y))\vert)\cong\HH_{i}(\vert\FF^{k}(\D_{R}(Y,X))\vert)\), and hence that \(\varphi\) induces isomorphisms on the level of \(i\)-dimensional homology for \(i=0,\dots,k-1\), as claimed.

    To prove commutativity of Diagram~\ref{eq:commutativity_of_phi} in the statement of Proposition~\ref{prop:main}, consider the following diagram
    \begin{equation}\label{eq:commutativity_of_phi_proof}
        \begin{tikzcd}
            \vert\FF^{k}(\D_{R}(X,Y))\vert \arrow[rrr, hook, "\iota_{R,R'}^{X}"] \arrow[ddd, "\varphi", swap] & & & \vert\FF^{k}(\D_{R'}(X,Y))\vert \arrow[ddd, "\varphi'"] \\
            & \vert\D_{R}(X,Y)\vert \arrow[r, hook] \arrow[d, "\psi", swap] \arrow[lu, hook'] & \vert\D_{R'}(X,Y)\vert \arrow[d, "\psi'"] \arrow[ru, hook] & \\
            & \vert\D_{R}(Y,X)\vert \arrow[r, hook] \arrow[ld, hook'] & \D_{R'}(Y,X)\vert \arrow[rd, hook] & \\
            \vert\FF^{k}(\D_{R}(Y,X))\vert \arrow[rrr, hook, "\iota_{R,R'}^{Y}"] & & & \vert\FF^{k}(\D_{R'}(Y,X))\vert
        \end{tikzcd}
    \end{equation}
    where \(\varphi\) and \(\varphi'\) are extensions of the homotopy equivalences \(\psi\) and \(\psi'\), respectively, as before; where hooked arrows denote inclusion maps; and where we identified \(\vert\D_{R}(X,Y)\vert\) and \(\vert\D_{R}(X,Y)^{(1)}\vert\) as before.
    Observe that the upper and lower trapezoids are commutative because the respective maps are inclusion maps, while commutativity of the left and right trapezoids follows from the fact that \(\varphi\) and \(\varphi'\) are extensions of \(\psi\) and \(\psi'\), respectively.
    Moreover, the inner rectangle commutes up to homotopy by~\cite[Theorem 3]{chowdhury_memoli_functorial_dowker_theorem} and we may thus assume its precise commutativity.\footnote{
        Precise commutativity of this rectangle is achieved by making the choices of \(y_{\sigma}\) in the definition of the maps \(\psi\) and \(\psi'\) in a consistent manner.
    }

    Now, let \(x\in\vert\FF^{k}(\D_{R}(X,Y))\vert\).
    If \(x\in\vert\D_{R}(X,Y)\vert\subseteq\vert\FF^{k}(\D_{R}(X,Y))\vert\), then the fact that \((\iota_{R,R'}^{Y}\circ\varphi)(x)=(\varphi'\circ\iota_{R,R'}^{X})(x)\) is an immediate consequence of commutativity of the trapezoids and the inner rectangle in Diagram~\ref{eq:commutativity_of_phi_proof}.
    Suppose now that \(x\in\vert\FF^{k}(\D_{R}(X,Y))\vert\setminus\vert\D_{R}(X,Y)\vert\), so that \(x\) belongs to the geometric realization of some simplex \(\sigma_{x}\) that is present in \(\FF^{k}(\D_{R}(X,Y))\) but not in \(\D_{R}(X,Y)\).
    Note that the extensions \(\varphi\) and \(\varphi'\) are constructed from \(\psi\) and \(\psi'\), respectively, on a per simplex basis.
    We may thus assume that \(\varphi'\) agrees with \(\varphi\) on the geometric realizations of simplices stemming that are already present in \(\FF^{k}(\D_{R}(X,Y))\), which establishes the equality \((\iota_{R,R'}^{Y}\circ\varphi)(x)=(\varphi'\circ\iota_{R,R'}^{X})(x)\) in this case.
\end{proof}

We now prove the main theorems, which we restate for convenience.

\ThmMainTechnical*

\begin{proof}[Proof of Theorem~\ref{thm:main_technical}]
    Let \(j,j'\in J\) be such that \(j<j'\), and consider the following diagram of maps
    \begin{equation}\label{eq:proof_cor_main}
        \begin{tikzcd}
            \vert\FF^{\geq k}(\D_{R_{j}}(X,Y))\vert \arrow[r, hook] & \vert\FF^{\geq k}(\D_{R_{j'}}(X,Y))\vert \\
            \vert\FF^{k}(\D_{R_{j}}(X,Y))\vert \arrow[r, hook] \arrow[d, swap, "\varphi"] \arrow[u, hook] & \vert\FF^{k}(\D_{R_{j'}}(X,Y))\vert \arrow[d, "\varphi'"] \arrow[u, hook] \\
            \vert\FF^{k}(\D_{R_{j}}(Y,X))\vert \arrow[r, hook] \arrow[d, hook] & \vert\FF^{k}(\D_{R_{j'}}(Y,X))\vert \arrow[d, hook] \\
            \vert\FF^{\geq k}(\D_{R_{j}}(Y,X))\vert \arrow[r, hook] & \vert\FF^{\geq k}(\D_{R_{j'}}(Y,X))\vert
        \end{tikzcd}
    \end{equation}
    where \(\varphi\) and \(\varphi'\) are maps as in the statement of Proposition~\ref{prop:main} and where hooked arrows denote inclusion maps.
    The top and bottom rectangles are commutative since the maps involved are inclusion maps, and commutativity of the middle rectangle follows Proposition~\ref{prop:main}.

    Since, for instance, \(\FF^{\geq k}(\D_{R_{j}}(X,Y))\) and \(\FF^{k}(\D_{R_{j}}(X,Y))\) share the same \(k\)-skeleton, it follows that the top left inclusion map induces an isomorphism on the level of \(i\)-dimensional homology for \(i=0,\dots,k-1\).
    Similarly, it follows that the same is true for the other vertical inclusion maps, and hence, by Proposition~\ref{prop:main}, for all vertical maps.
    Applying the homology functor to Diagram~\ref{eq:proof_cor_main}, and suppressing the two middle rows, we obtain the commutative diagram
    \begin{equation}\label{eq:proof_cor_main_homology}
        \begin{tikzcd}
            \HH_{i}(\vert\FF^{\geq k}(\D_{R_{j}}(X,Y))\vert) \arrow[r,] \arrow[d, leftrightarrow, "\cong"{rotate=90, anchor=south}] & \HH_{i}(\vert\FF^{\geq k}(\D_{R_{j'}}(X,Y))\vert) \arrow[d, leftrightarrow, "\cong"{rotate=90, anchor=south}] \\
            \HH_{i}(\vert\FF^{\geq k}(\D_{R_{j}}(Y,X))\vert) \arrow[r] & \HH_{i}(\vert\FF^{\geq k}(\D_{R_{j'}}(Y,X))\vert)
        \end{tikzcd}
    \end{equation}
    for \(i=0,\dots,k-1\).
    Diagram~\ref{eq:proof_cor_main_homology} thus establishes an isomorphism of persistence modules \(\PH_{i}(\FF^{\geq k}(\D_{\bullet}(X,Y)))\cong\PH_{i}(\FF^{\geq k}(\D_{\bullet}(Y,X)))\) for \(i=0,\dots,k-1\), as claimed.
\end{proof}

\ThmMainPractical*

\begin{proof}
    This is an immediate consequence of setting \(k=2\) in Theorem~\ref{thm:main_technical}.
\end{proof}

We conclude this section by providing an example illustrating that the Dowker-Rips duality is sharp in the sense that its conclusion does not hold for homological dimensions higher than \(1\).

\begin{proposition}\label{prop:counterexample_duality}
    There exists a setting for Theorem~\ref{thm:main_practical} in which the conclusion fails for \(i=2\).
\end{proposition}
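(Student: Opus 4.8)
The plan is to construct explicit finite sets $X,Y\subseteq Z$ in some metric space $(Z,d)$ such that, at some scale $\varepsilon$, the complex $\DR_{R_\varepsilon}(X,Y)$ has nontrivial second homology while $\DR_{R_\varepsilon}(Y,X)$ does not (or vice versa), and such that this discrepancy persists over an interval of $\varepsilon$-values so that it survives at the level of persistence modules. The key idea is to exploit the fact that flagification fills in a simplex as soon as its $1$-skeleton is present; so I want a configuration where, from the point of view of $X$, the pairwise-relation graph is rich enough to flag-fill a $2$-sphere's worth of $3$-simplices but not the $3$-cells bounding them — i.e.\ $\DR_{R_\varepsilon}(X,Y)$ should contain the boundary of a cross-polytope (or a minimal triangulation of $S^2$) without its interior, giving $\HH_2\neq 0$ — while $\DR_{R_\varepsilon}(Y,X)$, built from the transposed relation, collapses that cycle. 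Concretely I would start from the boundary of the octahedron: take $X=\{x_1,\dots,x_6\}$ with the six vertices of an octahedron, so that the Dowker-Rips $1$-skeleton records exactly the octahedron graph (each vertex adjacent to all but its antipode). If one can arrange witnesses in $Y$ so that every triangle of the octahedron boundary is a simplex of $\D_{R_\varepsilon}(X,Y)$ but no tetrahedron is (because no single $y$ is close to four pairwise-nonantipodal $x$'s simultaneously), then $\DR_{R_\varepsilon}(X,Y)=\partial(\text{octahedron})\simeq S^2$.

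The first step is therefore to choose the metric carefully. As in Proposition~\ref{prop:counterexample_interleaving}, I expect the cleanest route is to take $(Z,d)$ to be a finite graph with the shortest-path metric containing both $X$ and $Y$, and to tune edge lengths so that: (i) each pair $\{x_i,x_j\}$ with $i,j$ nonantipodal has some $y\in Y$ within distance $\varepsilon$ of both, putting the edge in; (ii) antipodal pairs are far apart (no common witness); and (iii) no $y\in Y$ is within $\varepsilon$ of four mutually nonantipodal $x_i$'s, so no $2$-dimensional Dowker simplex of cardinality $4$ appears. The second step is to verify that $\DR_{R_\varepsilon}(X,Y)$, being the flagification of a complex whose maximal Dowker simplices are the eight triangles of the octahedron, is exactly $\partial(\text{octahedron})$: flagification will try to add the tetrahedra, but that requires all four triangular faces to be present, and for an octahedron no four vertices are mutually adjacent, so nothing is added and $\HH_2(\DR_{R_\varepsilon}(X,Y))\cong\Z\neq 0$. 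The third step is to compute $\DR_{R_\varepsilon}(Y,X)$ and check its $\HH_2$ vanishes — here I would pick $Y$ small (say a handful of points, one "near" each octahedron face or edge) so that $\D_{R_\varepsilon}(Y,X)$, and hence its flagification, is low-dimensional or contractible; by making $|Y|<4$ or arranging the relation so that the $Y$-side Dowker complex is a simplex or a cone, $\HH_2(\DR_{R_\varepsilon}(Y,X))=0$ is forced. Finally I would note that all the inequalities in (i)--(iii) are strict, hence hold on an open interval of $\varepsilon$, so the discrepancy in $\HH_2$ holds over an interval and the persistence modules $\PH_2(\DR_\bullet(X,Y))$ and $\PH_2(\DR_\bullet(Y,X))$ are non-isomorphic (one has a bar covering that interval, the other does not).

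The main obstacle I anticipate is simultaneously satisfying (i) and (iii): I need every nonantipodal pair to have a common $\varepsilon$-witness in $Y$, yet no four mutually nonantipodal $x_i$ to have one. Since the octahedron graph has many $K_4$ subgraphs only if we are careless — actually it has $K_3$'s in abundance but its independence/clique structure must be respected — the witnesses must be placed "per edge" or "per triangle" rather than as hub vertices close to many $x_i$'s; a hub close to a whole triangle is acceptable (triangles are fine) but must be kept away from the fourth vertex, which for the octahedron means away from the antipode of one of the triangle's vertices — geometrically feasible but requiring the distances to be checked. A safe fallback is to use $Y$ with exactly one witness per edge of the octahedron (twelve points), each equidistant ($=\varepsilon$, or slightly less) from its two endpoints and strictly farther than $\varepsilon$ from every other $x_i$; this automatically gives (i) and (iii) since each $y$ sees only $2$ points of $X$, and then $\D_{R_\varepsilon}(X,Y)$ is literally the octahedron $1$-skeleton, whose flagification is $\partial(\text{octahedron})$, while $\D_{R_\varepsilon}(Y,X)$ has each maximal simplex of cardinality at most the maximal number of edges of the octahedron sharing a common endpoint, namely $4$, so one should double-check $\HH_2$ there — if it fails to vanish, replace the per-edge witnesses by per-triangle ones (eight points, each close to exactly one octahedral face), which makes the $Y$-side complex a disjoint-friendly low-dimensional complex with vanishing $\HH_2$. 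I would present whichever of these two concrete constructions checks out, with an accompanying figure analogous to Figure~\ref{fig:counterexample_interleaving}.
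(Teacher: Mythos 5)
Your core geometric insight matches the paper's: build one side so that the Dowker--Rips complex is the boundary of an octahedron, whose flagification is $S^2$ because the octahedron graph ($K_{2,2,2}$) has clique number $3$, while the other side has trivial $\HH_2$. But your proposal is a plan rather than a proof, and you explicitly defer the verification of the harder half (that $\HH_2(\DR_\varepsilon(Y,X))=0$) to ``whichever of these two concrete constructions checks out.'' Your per-edge fallback (twelve witnesses) is in fact the murkier of the two: there $\D_\varepsilon(Y,X)$ has $1$-skeleton the line graph of the octahedron, whose flagification is a mix of six $3$-simplices and eight ``face'' triangles, and its $\HH_2$ is not obviously zero. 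Your per-triangle fallback (eight face-centroid witnesses) does work --- $\D_\varepsilon(Y,X)$ then has $1$-skeleton $K_8$ minus a perfect matching, i.e.\ $K_{4\times 2}$, whose flagification is the boundary of the $4$-dimensional cross-polytope, hence $S^3$ with $\HH_2=0$ --- but you leave this unchecked. You also do not supply a concrete metric realization for either option, though one exists (face centroids of a regular octahedron in $\R^3$).

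The paper's construction is both smaller and self-verifying, and is worth internalizing: take $X$ to be the four vertices of a regular unit-edge tetrahedron and $Y$ the six edge-midpoints, with $\varepsilon=1/2$. Then each midpoint $y_{ij}$ witnesses exactly the edge $\{x_i,x_j\}$, so $\D_{1/2}(X,Y)=K_4$, whose flagification is a $3$-simplex (contractible, $\HH_2=0$), and each $x_i$ witnesses exactly the three midpoints carrying index $i$, so $\D_{1/2}(Y,X)$ has the octahedron $1$-skeleton and flagifies to $\partial(\text{octahedron})\simeq S^2$. This uses only ten points in $\R^3$, needs no auxiliary graph metric, and the ``trivial'' side is trivially so. Note that your tetrahedron-vertex witnesses only hit four of the eight octahedron faces, which is fine: flagification only needs the $1$-skeleton, so you never need all faces to be genuine Dowker simplices, a simplification your plan over-engineers by insisting on per-face or per-edge witnesses. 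One further minor point: you do not need the discrepancy to persist on an interval; a single scale at which the homology groups differ already rules out an isomorphism of persistence modules, since such an isomorphism must restrict to an isomorphism at every scale.
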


\begin{proof}
    Let \(X=\left\{x_{0},\dots,x_{3}\right\}\subseteq\R^{3}\) denote the set of vertices of a regular tetrahedron with edge length \(1\) embedded in \(\R^{3}\), and let \(Y=\left\{y_{ij}\mid 0\leq i<j\leq 3\right\}\), where \(y_{ij}\) is defined to be the midpoint of \(x_{i}\) and \(x_{j}\), \(0\leq i<j\leq 3\).
    Denote by \(\D_{\bullet}(X,Y)\) the filtration given by \(\left\{\D_{R_{\varepsilon}}(X,Y)\right\}_{\varepsilon\in\R^{+}}\), and similarly for \(\D_{\bullet}(Y,X)\).
    Then we have that \(\D_{1/2}(X,Y)\) is homeomorphic to the geometric realization of \(K_{4}\), the complete graph on four vertices.
    In contrast, the complex \(\D_{1/2}(Y,X)\) has vertex set \(Y\), and a set of vertices spans a simplex precisely when their subscripts share a common element.
    See Figure~\ref{fig:counterexample_duality} for an illustration of the complexes \(\D_{1/2}(X,Y)\) and \(\D_{1/2}(Y,X)\).

    It follows that the flagifications of \(\D_{1/2}(X,Y)\) and \(\D_{1/2}(Y,X)\) equal a \(3\)-simplex and an octahedron, respectively.
    Hence \(\DR_{1/2}(X,Y)\) and \(\DR_{1/2}(Y,X)\) are homotopy equivalent to a point and a \(2\)-sphere, respectively.
    This implies that \[
        \HH_{2}(\DR_{1/2}(X,Y))\cong 0\quad\text{and}\quad\HH_{2}(\DR_{1/2}(Y,X))\cong\Z,
    \] and, in particular, that \[
        \PH_{2}(\DR_{1/2}(X,Y))\not\cong\PH_{2}(\DR_{1/2}(Y,X)),
    \] as claimed.
\end{proof}

\begin{figure}[ht]
    \centering
    \begin{subfigure}[t]{0.45\textwidth}
        \centering
        \def\svgwidth{\linewidth}
        %% Creator: Inkscape 1.4.2 (f4327f4, 2025-05-13), www.inkscape.org
%% PDF/EPS/PS + LaTeX output extension by Johan Engelen, 2010
%% Accompanies image file 'duality_counterexample_1.pdf' (pdf, eps, ps)
%%
%% To include the image in your LaTeX document, write
%%   \input{<filename>.pdf_tex}
%%  instead of
%%   \includegraphics{<filename>.pdf}
%% To scale the image, write
%%   \def\svgwidth{<desired width>}
%%   \input{<filename>.pdf_tex}
%%  instead of
%%   \includegraphics[width=<desired width>]{<filename>.pdf}
%%
%% Images with a different path to the parent latex file can
%% be accessed with the `import' package (which may need to be
%% installed) using
%%   \usepackage{import}
%% in the preamble, and then including the image with
%%   \import{<path to file>}{<filename>.pdf_tex}
%% Alternatively, one can specify
%%   \graphicspath{{<path to file>/}}
%% 
%% For more information, please see info/svg-inkscape on CTAN:
%%   http://tug.ctan.org/tex-archive/info/svg-inkscape
%%
\begingroup%
  \makeatletter%
  \providecommand\color[2][]{%
    \errmessage{(Inkscape) Color is used for the text in Inkscape, but the package 'color.sty' is not loaded}%
    \renewcommand\color[2][]{}%
  }%
  \providecommand\transparent[1]{%
    \errmessage{(Inkscape) Transparency is used (non-zero) for the text in Inkscape, but the package 'transparent.sty' is not loaded}%
    \renewcommand\transparent[1]{}%
  }%
  \providecommand\rotatebox[2]{#2}%
  \newcommand*\fsize{\dimexpr\f@size pt\relax}%
  \newcommand*\lineheight[1]{\fontsize{\fsize}{#1\fsize}\selectfont}%
  \ifx\svgwidth\undefined%
    \setlength{\unitlength}{269.23722875bp}%
    \ifx\svgscale\undefined%
      \relax%
    \else%
      \setlength{\unitlength}{\unitlength * \real{\svgscale}}%
    \fi%
  \else%
    \setlength{\unitlength}{\svgwidth}%
  \fi%
  \global\let\svgwidth\undefined%
  \global\let\svgscale\undefined%
  \makeatother%
  \begin{picture}(1,0.72502441)%
    \lineheight{1}%
    \setlength\tabcolsep{0pt}%
    \put(0,0){\includegraphics[width=\unitlength,page=1]{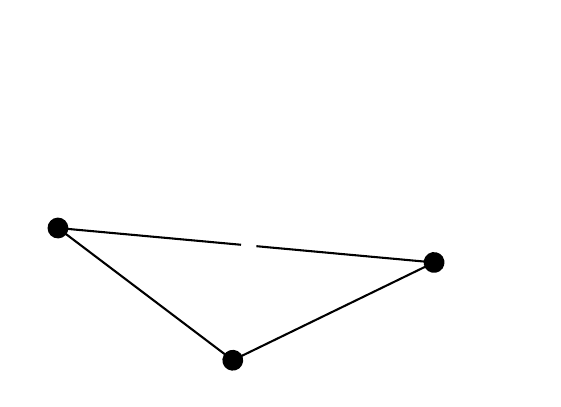}}%
    \put(0.00194629,0.30559514){\color[rgb]{0,0,0}\makebox(0,0)[lt]{\lineheight{0}\smash{\begin{tabular}[t]{l}\(x_{2}\)\end{tabular}}}}%
    \put(0.46906021,0.68340296){\color[rgb]{0,0,0}\makebox(0,0)[lt]{\lineheight{0}\smash{\begin{tabular}[t]{l}\(x_{0}\)\end{tabular}}}}%
    \put(0.80917757,0.23964477){\color[rgb]{0,0,0}\makebox(0,0)[lt]{\lineheight{0}\smash{\begin{tabular}[t]{l}\(x_{3}\)\end{tabular}}}}%
    \put(0.39607367,0.01945957){\color[rgb]{0,0,0}\makebox(0,0)[lt]{\lineheight{0}\smash{\begin{tabular}[t]{l}\(x_{1}\)\end{tabular}}}}%
    \put(0,0){\includegraphics[width=\unitlength,page=2]{duality_counterexample_1.pdf}}%
  \end{picture}%
\endgroup%

    \end{subfigure}
    \hfill
    \begin{subfigure}[t]{0.45\textwidth}
        \centering
        \def\svgwidth{\linewidth}
        %% Creator: Inkscape 1.4.2 (f4327f4, 2025-05-13), www.inkscape.org
%% PDF/EPS/PS + LaTeX output extension by Johan Engelen, 2010
%% Accompanies image file 'duality_counterexample_2.pdf' (pdf, eps, ps)
%%
%% To include the image in your LaTeX document, write
%%   \input{<filename>.pdf_tex}
%%  instead of
%%   \includegraphics{<filename>.pdf}
%% To scale the image, write
%%   \def\svgwidth{<desired width>}
%%   \input{<filename>.pdf_tex}
%%  instead of
%%   \includegraphics[width=<desired width>]{<filename>.pdf}
%%
%% Images with a different path to the parent latex file can
%% be accessed with the `import' package (which may need to be
%% installed) using
%%   \usepackage{import}
%% in the preamble, and then including the image with
%%   \import{<path to file>}{<filename>.pdf_tex}
%% Alternatively, one can specify
%%   \graphicspath{{<path to file>/}}
%% 
%% For more information, please see info/svg-inkscape on CTAN:
%%   http://tug.ctan.org/tex-archive/info/svg-inkscape
%%
\begingroup%
  \makeatletter%
  \providecommand\color[2][]{%
    \errmessage{(Inkscape) Color is used for the text in Inkscape, but the package 'color.sty' is not loaded}%
    \renewcommand\color[2][]{}%
  }%
  \providecommand\transparent[1]{%
    \errmessage{(Inkscape) Transparency is used (non-zero) for the text in Inkscape, but the package 'transparent.sty' is not loaded}%
    \renewcommand\transparent[1]{}%
  }%
  \providecommand\rotatebox[2]{#2}%
  \newcommand*\fsize{\dimexpr\f@size pt\relax}%
  \newcommand*\lineheight[1]{\fontsize{\fsize}{#1\fsize}\selectfont}%
  \ifx\svgwidth\undefined%
    \setlength{\unitlength}{269.23722875bp}%
    \ifx\svgscale\undefined%
      \relax%
    \else%
      \setlength{\unitlength}{\unitlength * \real{\svgscale}}%
    \fi%
  \else%
    \setlength{\unitlength}{\svgwidth}%
  \fi%
  \global\let\svgwidth\undefined%
  \global\let\svgscale\undefined%
  \makeatother%
  \begin{picture}(1,0.72502441)%
    \lineheight{1}%
    \setlength\tabcolsep{0pt}%
    \put(0,0){\includegraphics[width=\unitlength,page=1]{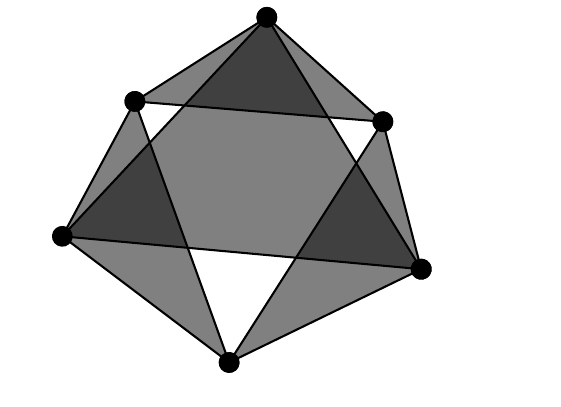}}%
    \put(0.72506948,0.49113131){\color[rgb]{0,0,0}\makebox(0,0)[lt]{\lineheight{0}\smash{\begin{tabular}[t]{l}\(y_{03}\)\end{tabular}}}}%
    \put(0.78900493,0.22384353){\color[rgb]{0,0,0}\makebox(0,0)[lt]{\lineheight{0}\smash{\begin{tabular}[t]{l}\(y_{13}\)\end{tabular}}}}%
    \put(0.12398194,0.53571099){\color[rgb]{0,0,0}\makebox(0,0)[lt]{\lineheight{0}\smash{\begin{tabular}[t]{l}\(y_{02}\)\end{tabular}}}}%
    \put(0.00185713,0.28721352){\color[rgb]{0,0,0}\makebox(0,0)[lt]{\lineheight{0}\smash{\begin{tabular}[t]{l}\(y_{12}\)\end{tabular}}}}%
    \put(0.39015882,0.01280026){\color[rgb]{0,0,0}\makebox(0,0)[lt]{\lineheight{0}\smash{\begin{tabular}[t]{l}\(y_{23}\)\end{tabular}}}}%
    \put(0.51404445,0.68531072){\color[rgb]{0,0,0}\makebox(0,0)[lt]{\lineheight{0}\smash{\begin{tabular}[t]{l}\(y_{01}\)\end{tabular}}}}%
  \end{picture}%
\endgroup%

    \end{subfigure}
    \caption{The complexes \(\D_{1/2}(X,Y)\) (left) and \(\D_{1/2}(Y,X)\) (right) from the proof of Proposition~\ref{prop:counterexample_duality}.}
    \label{fig:counterexample_duality}
\end{figure}

%%%%%%%%%%%%%%%%%%%%%%%%%%%%%%%%%%%%%%%%%%%%%%%%%%%%%%

\section{The Dowker-Rips complex as a drop-in replacement for the Dowker complex}\label{sec:dowker_rips_as_drop_in_replacement}

In this section, we present a machine learning application in which using the Dowker-Rips complex instead of the Dowker complex leads to gains in speed while at the same time not negatively impacting performance.
More concretely, it is shown in~\cite{stolz_relational_persistent_homology} that the Dowker complex may be used in a pipeline classifying tumor microenvironments into anti-tumor and pro-tumor macrophage dominant.
We briefly review this pipeline here and refer the reader to~\cite[Section 5.1.1]{stolz_relational_persistent_homology} for details.

First, (an image of) a microenvironment is represented as a two-dimensional point cloud, each point of which is labeled according to whether it represents a blood vessel, necrotic cell, tumor cell or macrophage.
Subsequently, the Dowker complex of one class of points relative to another is constructed; this is done for each of the label combinations macrophage-tumor, tumor-blood vessel and macrophage-blood vessel.
For each of the complexes, persistent homology is computed, represented as a persistence diagram and vectorized in a persistence image, yielding three persistence image for each microenvironment.
These persistence images are flattened into vectors, concatenated and passed to a support vector machine (SVM) for classification of the microenvironment into ``anti-tumor" and ``pro-tumor".
As shown in~\cite{stolz_relational_persistent_homology}, this pipeline achieves a median classification accuracy of 86.6\% across ten runs (controlling for randomized components in the SVM).

We reproduced the above pipeline and result, and subsequently ran the same pipeline with the Dowker complex replaced by the Dowker-Rips complex; see Table~\ref{table:results} for the results.\footnote{
    Python code to run the pipelines is available at \hurl{github.com/m-a-huber/dowker-rips-tumor-prediction}.
    Running it requires our implementations of the Dowker-Rips and the Dowker complex, which are available at \hurl{github.com/m-a-huber/dowker-rips-complex} and \hurl{github.com/m-a-huber/dowker-complex}, respectively.
}
In that table, we report the average classification accuracy with its standard deviation as well as the median accuracy across the ten runs.\footnote{
    The discrepancy between the median accuracy of the pipeline using the Dowker complex reported in~Table~\ref{table:results} and that found in~\cite{stolz_relational_persistent_homology} stems from the fact that we ported the original pipeline from Julia to Python.
}
We thus find that using the Dowker-Rips complex as a drop-in replacement for the Dowker complex in the pipeline above results in essentially the same classification performance.
Crucially, however, we found that computation of the relevant complexes and their persistent homologies was sped up by a factor of over 14 when using the Dowker-Rips complex instead of the Dowker complex.\footnote{
    We ran our experiments on a laptop with a 12th Gen Intel Core i7-1260P processor running at 2.10GHz.
}

\begin{table}
  \caption{Results from microenvironment classification}
  \label{table:results}
  \centering
  \begin{tabular}{lrr}
    \toprule
    Complex used & Mean accuracy & Median accuracy\\
    \midrule
    Dowker-Rips & 86.09±1.39 & 86.05 \\
    Dowker & 85.69±1.49 & 85.51\\
    \bottomrule
  \end{tabular}
\end{table}

%%%%%%%%%%%%%%%%%%%%%%%%%%%%%%%%%%%%%%%%%%%%%%%%%%%%%%

\bibliography{biblio}

%%%%%%%%%%%%%%%%%%%%%%%%%%%%%%%%%%%%%%%%%%%%%%%%%%%%%%

\end{document}